\numberwithin{equation}{section}
\newtheorem{Prop}[equation]{Proposition}
\newtheorem{Thm}[equation]{Theorem}
\newtheorem{Lemma}[equation]{Lemma}
\theoremstyle{definition}
\newtheorem{Def}[equation]{Definition}
\newtheorem{Rem}[equation]{Remark}
\newcommand{\N}{\mathbb{N}}
\newcommand{\Q}{\mathbb{Q}}
\newcommand{\Z}{\mathbb{Z}}
\newcommand{\f}{\mathfrak{F}}
\newcommand{\QQ}{Q}
  \def\o{{\omega}}
    \def\a{{\alpha}}
\def\b{{\beta}}
\begin{document}

\title{The lattice of primary ideals of orders \\ in quadratic number fields}

\author{G. Peruginelli\footnote{Via Pietro Coccoluto Ferrigni 68, 57125 Livorno, Italy. E-mail: g.peruginelli@tiscali.it. } and P. Zanardo\footnote{Department of Mathematics, University of Padova, Via Trieste 63,  35121 Padova, Italy. E-mail: pzanardo@math.unipd.it. Research supported by  ``Progetti di Eccellenza 2011/12" of Fondazione CARIPARO and by the grant "Assegni Senior" of the University of Padova.}}

\maketitle

\begin{abstract}
\noindent
Let $O$ be an order in a quadratic number field $K$ with ring of integers $D$, such that the conductor $\f = f D$ is a prime ideal of $O$,  where $f\in\Z$ is a prime. We give a complete description of the $\f$-primary ideals of $O$. They form a lattice with a particular structure by layers; the first layer, which is the core of the lattice, consists of those $\f$-primary ideals not contained in $\f^2$.  We get three different cases, according to whether the prime number $f$ is split, inert or ramified in $D$.
\end{abstract}

\medskip
\noindent{\small \textbf{Keywords}: Orders, Conductor, Primary ideal, Lattice of ideals.\\ \textbf{MSC Classification codes}:  11R11, 11R04.}


\section{Introduction}

A Dedekind domain is defined as an integral domain in which every ideal can be factored into a product of prime ideals (\cite[\S 6, Ch. IV, p. 270]{ZS}); moreover, this factorization is necessarily unique (\cite[Corollary p. 273]{ZS}). We are interested here in quadratic orders, that is, integral domains $O$ whose integral closure is the ring of integers $D$ of a quadratic number field $K = \Q[\sqrt{d}]$, $d$ a square-free integer. We say that an order is proper if it is not integrally closed, that is, $O\subsetneq D$ (recall that $D$ is a Dedekind domain).  Since a Dedekind domain is necessarily integrally closed, if $O$ is a proper order then there exist ideals of $O$ which cannot be factored into a product of prime ideals. However, since an order is a one-dimensional Noetherian domain, each ideal of $O$ can be written uniquely as
a product of primary ideals (\cite[Theorem 9, Ch. IV, \S 5, p. 213]{ZS}). An order $O$ is determined by its conductor $\f$, defined as the largest ideal of $D$ contained in $O$; equivalently, $\f = \{x\in O : xD\subseteq O\}$. Since $D$ is a finitely generated $O$-module, $\f$ is always non-zero and it is a proper ideal of $O$ if and only if the order is proper. 
Each ideal coprime to the conductor, called {\it regular}, has a unique factorization into prime ideals of $O$ \cite{Cox}. In particular, each regular primary ideal is equal to a power of its radical. Actually, this condition characterizes the regular primary ideals (see \cite[Lemma 2.3]{PSZ}). More interesting is the situation for primary ideals that are non-regular.  In the present paper we focus on the most natural case when the conductor $\f$ is a prime ideal of $O$, so that $\f= fD$, for some prime number $f\in\Z$. In this case, it makes sense to talk about $\f$-primary ideals (i.e., primary ideals whose radical is equal to $\f$). In particular, we will relate the structure of the lattice of $\f$-primary ideals to the splitting type of $f$ in $D$. We reserve further investigations for the general case to a future work. 

Our purpose is to give a detailed description of the structure of the lattice of $\f$-primary ideals of a quadratic order $O$. We get three completely different lattices of $\f$-primary ideals, according to whether $fD$ is a prime ideal in $D$ (inert case), or it is the product of two distinct prime ideals of $D$ (split case), or it is equal to the square of a prime ideal of $D$ (ramified case). However, these lattices have a crucial property in common, namely, a {\it structure by layers}. This means that the structure of the lattice is determined by its first layer, namely the set of $\f$-primary ideals not contained in $\f^2$, which we call {\it basic} $\f$-primary ideals. The remaining part of the lattice is formed by the $n$-th layers of the ideals contained in $\f^n$ and not contained in $\f^{n+1}$, for each $n > 1$, and all these layers reproduce the same pattern of the first layer.

In Sections \ref{general def results} and \ref{intermediatefprimary} we characterize the $\f$-basic ideals. We firstly characterize the $\f$-basic ideals which are also $D$-modules (that is, ideals of $D$). This is a crucial step to get a complete description of the first layer, since every $\f$-basic ideal lies between a suitable $\f$-basic $D$-module $Q$ and $fQ$.  We also identify the $\f$-basic ideals that are principal. We show that there are exactly $f+1$ pairwise distinct intermediate ideals properly lying between $\f$ and $\f^2$. 

In Section \ref{lattice} we examine separately the three cases mentioned above, namely, $f$ inert, split or ramified in $D$, that gives rise to different structures of the corresponding lattices of $\f$-primary ideals.

In the general case of a proper quadratic order $O$ whose conductor $\f$ is not necessarily a prime ideal, we know that $\f$ can be written uniquely as a product of primary ideals $\mathfrak G_1,\ldots,\mathfrak G_s$ whose radicals are distinct maximal ideals $\f_1,\ldots,\f_s$ of $O$. In Remark \ref{finalremark} at the end of the paper we make some initial comments on this case, that we intend to thoroughly investigate in a coming paper.   

\section{General definitions and results}\label{general def results}

In what follows, we will freely use the standard results on rings of integers in quadratic number fields. For example, see \cite{J} and  \cite[Chapter V]{ZS}. As usual, for elements $z \in D$ and ideals $I$, the symbols $\bar{z}$, $\bar{I}$ and $N(z)$, $N(I)$ denote the conjugates and the norms, respectively;  $D^*$, $O^*$ denote the multiplicative groups of the units of $D$ and $O$. If $I$ is an ideal of $O$, $ID$ denotes the extended ideal in $D$, i.e., the ideal of $D$ generated by $I$.  Moreover, in order to simplify the notation, the symbol ``$ \subset$'' will denote proper containment  and as usual ``$I\not\subset J$'' will denote that $I$ is not contained in $J$.  

We fix some notation. Let $d$ be a square-free integer. The ring of integers of $K=\Q(\sqrt{d})$ is equal to 
$D=\Z[\o]$, where either $\o = \sqrt{d}$, when $d \equiv 2, 3$ modulo $4$, or $\o = (1 + \sqrt{d})/2$, when $d \equiv  1$ modulo $4$. In the latter case, we get $\o^2 = \o -(1 + d)/4$. Let now $f$ be a positive integer and $O=\Z[f \o]$ be the unique quadratic order in $K$ such that $[D:O]=f$. For $\a, \b \in O$, we set $(\a, \b) = \a O + \b O$; in general, $(\a, \b)$ strictly contains the $\Z$-module $\a \Z + \b \Z$. By definition, the conductor of $O$ in $D$ is the ideal
$$
\f= \{x \in O : xD \subseteq O\} = f D = f\Z+ f \omega\Z= fO+ f\omega O.
$$
Recall that $\f$ is the largest ideal of $D$ contained in $O$. In particular, $\f$ is not a principal ideal of $O$. A direct check shows that $\f^2= f \f$, hence $\f^k= f^{k-1}\f$ for each $k > 0$. It is also useful to note that
$$
N(\f^k) = |O/\f^k| = |\Z/f^k \Z \oplus  f \o\Z/  f^k \o \Z| = f^{2k-1}.
$$

Since $O/ \f \cong \Z/f\Z$, we immediately see that $\f$ is a prime ideal of $O$ if and only if $f$ is a prime number.  As we have already said in the Introduction, throughout the paper we will assume that $\f$ is a prime ideal; equivalently, $f$ will always denote an assigned prime number. In particular, under the present circumstances, in order to study non-regular ideals, it will make sense to talk about $\f$-primary ideals.  Note that there is no ideal of $O$ lying properly between $fO$ and $\f$, since $[\f : fO] = f$.

It is well known that every primitive ideal of $O$ (i.e. $Q\not\subset nO$, for each $n\geq2$) can be written as 
\begin{equation*}
Q = q \Z + (a+f\omega) \Z=(q,a+f\omega) 
\end{equation*}
where $q,a\in\Z$, such that  $q\Z=Q\cap\Z$ and $q$ divides $N(a+f\omega)$ (see for example \cite{BP1} and \cite{ZZ}). The ideal $Q$ is $O$-invertible if and only if  $(Q : Q) = \{ x \in \Q(\sqrt{d}) : x Q \subseteq Q \} = O$ (\cite[Proposition 7.4]{Cox}). Otherwise, $Q$ is not $O$-invertible and $(Q : Q) = D$ (i.e. $Q$ is a $D$-module). Note that $Q$ is $O$-invertible if and only if $Q \bar{Q} = N(Q) O$; otherwise, $Q \bar{Q} = N(Q) f D$.

\begin{Lemma} \label{generator}
In the above notation, let $\a \in \f \setminus f O$. Then $\f = (f, \a)$.
\end{Lemma}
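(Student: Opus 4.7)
The plan is to use the observation, already made in the excerpt just before the lemma, that there is no ideal of $O$ lying properly between $fO$ and $\mathfrak{F}$, because the index $[\mathfrak{F}:fO]=f$ is prime. Once this is in hand, the lemma is essentially a sandwich argument.

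First I would check that $(f,\alpha)\subseteq \mathfrak{F}$. This is immediate because $f\in\mathfrak{F}$ (indeed $\mathfrak{F}=fD\supseteq fO$) and $\alpha\in\mathfrak{F}$ by hypothesis, so the ideal of $O$ they generate lies inside $\mathfrak{F}$. Second, I would check that $fO\subsetneq (f,\alpha)$: the containment $fO\subseteq (f,\alpha)$ is clear from $f\in(f,\alpha)$, and it is strict because $\alpha\in(f,\alpha)$ but $\alpha\notin fO$ by assumption. Combining these, $(f,\alpha)$ is an ideal of $O$ strictly containing $fO$ and contained in $\mathfrak{F}$.

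Finally, I would invoke the fact that no ideal of $O$ lies strictly between $fO$ and $\mathfrak{F}$: as noted in the paper, $\mathfrak{F}/fO$ is a group of prime order $f$, hence simple, so any intermediate $O$-submodule must be one of the two endpoints. Since $(f,\alpha)\neq fO$, we conclude $(f,\alpha)=\mathfrak{F}$, as required.

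There is no real obstacle here: the lemma is a short consequence of the index computation already established in the text, and the only subtlety is to be explicit about why $fO\subsetneq (f,\alpha)\subseteq\mathfrak{F}$ forces equality on the right.
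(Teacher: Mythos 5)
Your proof is correct, but it takes a different route from the paper's. The paper argues constructively: writing $\a = fa + f\o b$ with $f \nmid b$ (which is exactly the condition $\a \notin fO$), it picks $c$ with $cb \equiv 1 \pmod f$ and computes $c\a = f\o + f(ca + f\o k)$, thereby exhibiting $f\o$ as an explicit element of $(f,\a)$; since $\f = fO + f\o O$, this gives $\f \subseteq (f,\a)$ directly. You instead sandwich $(f,\a)$ strictly between $fO$ and $\f$ and invoke the fact, recorded in the paper just before the lemma, that $[\f : fO] = f$ is prime, so $\f/fO$ has no proper nonzero subgroups and there is no intermediate ideal. Both arguments are complete; yours is shorter and leans on the index computation already available, while the paper's is self-contained and produces the generator $f\o$ explicitly, which is in the spirit of the explicit generating sets $(f^k, f\a)$ used throughout the rest of the paper.
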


\begin{proof}
It suffices to show that $f \o \in (f, \a)$. Say $\a = f a + f \o b$, where $a, b \in \Z$ and $f$ does not divide $b$, since $\a \notin f O$. Take $c, k \in \Z$ such that $cb = 1 + f k$. We get 
$$
c \a =  f \o + f (ca + f \o k),
$$ 
whence $f \o \in (f, \a)$, as required.
\end{proof}

Let $Q$ be an ideal of $O$. Using the properties of the norm, it is clear that $Q$ is $\f$-primary if and only if its norm $N(Q)$ is a positive power of $f$. Moreover, if $Q$ is a primitive $\f$-primary ideal of norm $f^k$, then 
\begin{equation}\label{generatorprimary} 
Q = f^k \Z + f \a \Z=f^kO+f\a O,
\end{equation}
for some $\a \in D \setminus O$.

We give a definition which is crucial for our discussion.

\begin{Def} Let $Q\subset O$ be a $\f$-primary ideal and let $t\in O$. We say that $Q$ is $\f$-{\it basic} if $Q \not\subset \f^2= f \f$. We say that $t$ is $\f$-primary if $tO$ is an $\f$-primary ideal. We say that $t$ is $\f$-basic (or simply basic) if $tO$ is a $\f$-primary basic ideal.
\end{Def}

By definition, $\f$ and $fO$ are $\f$-basic ideals; indeed, they are the only $\f$-primary ideals containing $f$, since there are no intermediate ideals between $fO$ and $\f$. An element $t$ in $O$ which is $\f$-primary lies in $\f= f\Z+ f \o \Z$ and therefore has the form $t= fx+ f\o y$, for some $x,y\in\Z$.

The following equivalences for a $\f$-primary ideal $Q$ are straightforward:
\begin{equation}\label{basicprimitive}
Q \textnormal{ is } \f\textnormal{ -basic } \Leftrightarrow Q \textnormal{  is primitive  } \Leftrightarrow Q \not\subset fO.
\end{equation}

Given an $\f$-primary ideal $Q$, the next lemma shows how to associate to $Q$ an $\f$-basic primary ideal in a canonical way.

\begin{Lemma}\label{basicfprimaryideals}
Let $Q$ be a $\f$-primary ideal and let $k=\max\{n\in\N \mid \f^n\supseteq Q\}$. Then we have:

\begin{itemize}
\item[(i)] $Q= f^{k-1} Q'$, where $Q'$ is a $\f$-basic ideal.
\item[(ii)] If $Q/f^m$ is $\f$-basic for some $m > 0$, then $m$ coincides with $k - 1$.
\end{itemize}
\end{Lemma}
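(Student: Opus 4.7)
The plan is to analyze both parts via the ideal quotient $Q' := (Q :_O f^{k-1}) = \{x \in O : f^{k-1}x \in Q\}$, leaning on the identity $\f^n = f^{n-1}\f$ recorded in the preliminaries.

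For (i), I would first verify the factorization $Q = f^{k-1}Q'$. The inclusion $f^{k-1}Q' \subseteq Q$ is immediate from the definition of the colon ideal. For the reverse, since $Q \subseteq \f^k = f^{k-1}\f \subseteq f^{k-1}O$, every $q \in Q$ can be written as $q = f^{k-1}\alpha$ with $\alpha \in \f \subseteq O$; such an $\alpha$ clearly belongs to $Q'$, giving $Q \subseteq f^{k-1}Q'$. Next I would check that $Q'$ is itself $\f$-primary: we have $f^{k-1} \notin Q$, because for $k=1$ this is just $1 \notin Q$, and for $k \geq 2$ the assumption $f^{k-1} \in Q \subseteq \f^k = f^{k-1}\f$ would force (after cancellation of $f^{k-1}$ in the domain $O$) the contradiction $1 \in \f$. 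The standard fact that the colon of a primary ideal by an element not in it remains primary with the same radical then yields that $Q'$ is $\f$-primary. Finally, $Q'$ is basic: otherwise $Q' \subseteq \f^2 = f\f$ would give $Q = f^{k-1}Q' \subseteq f^k\f = \f^{k+1}$, contradicting the maximality of $k$.

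For (ii), suppose $Q = f^m Q''$ with $Q''$ an $\f$-basic ideal and $m > 0$. Since $Q'' \subseteq \f$, we get $Q \subseteq f^m \f = \f^{m+1}$, so $k \geq m+1$ by the definition of $k$. If the inequality were strict, that is $k \geq m+2$, then $f^m Q'' = Q \subseteq \f^{m+2} = f^{m+1}\f$, and cancellation of $f^m$ (valid because $O$ is a domain) would yield $Q'' \subseteq f\f = \f^2$, contradicting basicness of $Q''$. Hence $m = k-1$.

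The argument is essentially bookkeeping once one sets up the quotient $Q' = (Q : f^{k-1})$; the only mildly delicate point is checking that this quotient is still $\f$-primary, which reduces to confirming $f^{k-1} \notin Q$ via the identity $\f^k = f^{k-1}\f$. Everything else follows from cancellation in the domain $O$.
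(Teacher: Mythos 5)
Your proof is correct and follows essentially the same route as the paper: your $Q' = (Q :_O f^{k-1})$ coincides with the paper's $Q' = Q/f^{k-1}$ because $Q \subseteq \f^k = f^{k-1}\f$, and both arguments derive basicness of $Q'$ and the equality $m = k-1$ from the maximality of $k$ via the identity $\f^{n} = f^{n-1}\f$. The only difference is cosmetic: you justify that $Q'$ is $\f$-primary by checking $f^{k-1} \notin Q$ and invoking the standard fact about colon ideals of primary ideals, where the paper simply asserts it.
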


\begin{proof}
(i) Since $\f^k = f^{k-1} \f \supseteq Q$, we get $ Q/f^{k-1}= Q' \subseteq \f$. So, as well as $Q$, $Q'$ is $\f$-primary. Moreover, $Q' \not\subset \f^2$, otherwise $f^{k-1} \f^2 = \f^{k+1} \supseteq f^{k-1} Q' = Q$, against the maximality of $k$. We conclude that $Q'$ is $\f$-basic.

\medskip

(ii) From $Q/f^m \subseteq  \f $ we get $Q \subseteq f^{m} \f = \f^{m+1}$, whence $m + 1 \le k$, by the definition of $k$. Moreover, from $Q/f^m \not\subset  \f^2 = f \f $ we get $Q \not\subset f^{m + 1} \f = \f^{m +2 }$, hence $m + 2 > k$.
\end{proof}

Given a $\f$-primary ideal $Q$, the uniquely determined $\f$-basic ideal $Q'$ containing $Q$, as defined in (i) of Lemma \ref{basicfprimaryideals}, is called the {\it basic component} of $Q$. It follows that the lattice $\mathcal L$ of all the $\f$-primary ideals is determined by the lattice $\mathcal L_1$ of the $\f$-basic ideals. In fact, $\mathcal L_1$ will be the first layer of $\mathcal L$, and the other layers of the lattice will be the $\mathcal L_n$ ($n > 0$), consisting of those $\f$-primary ideals contained in $\f^n$ but not in $\f^{n+1}$. By Lemma \ref{basicfprimaryideals}, the elements of $\mathcal L_n$ are obtained by those of $\mathcal L_1$, just multiplying by $f^{n-1}$. Without loss of generality, we focus our attention on $\mathcal L_1$. 
Hence, in what follows, we will investigate the $\f$-basic ideals of $O$.

The next proposition characterizes primary elements in terms of their norms. 

\begin{Prop} \label{norm}\label{primitiveprimary}
Let $t= fx+ f\o y \in \f$ be $\f$-primary, $x,y\in\Z$. Then ${\rm g.c.d.}(x, y) = f^a$, for some $a \ge 0$. Moreover, $t$ is $\f$-basic if and only if $x,y$ are coprime. If the latter conditions hold, then $t$ is an irreducible element of $O$ which is not prime. 
\end{Prop}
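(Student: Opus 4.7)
The plan is to analyze the $K/\Q$-norm of $t = f(x + \o y)$ in three short steps.

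For the first assertion on $\gcd(x,y)$, multiplicativity of the norm gives $N(tO) = |N_{K/\Q}(t)| = f^2 \cdot |N_{K/\Q}(x + \o y)|$, and since $tO$ is $\f$-primary, $N(tO)$ is a positive power of $f$, so $|N_{K/\Q}(x + \o y)|$ is itself a power of $f$. Writing $g = \gcd(x,y)$ and $x = g x'$, $y = g y'$ with $\gcd(x', y') = 1$, one has $|N_{K/\Q}(x + \o y)| = g^2 \cdot |N_{K/\Q}(x' + \o y')|$. Thus $g^2$ divides a power of $f$, forcing $g = f^a$ for some $a \ge 0$.

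For the equivalence ``$t$ is $\f$-basic iff $x, y$ are coprime'', I argue directly from the definition. $t$ is $\f$-basic iff $tO \not\subseteq \f^2$, equivalently (for a principal ideal) iff $t \notin \f^2$. Now $\f^2 = f^2 D$, and since $\{1, \o\}$ is a $\Z$-basis of $D$, one has $f^2 D = f^2 \Z + f^2 \o \Z$; hence $t = fx + f \o y$ belongs to $\f^2$ iff $f \mid x$ and $f \mid y$. By the first step this fails iff $a = 0$, i.e., iff $x, y$ are coprime.

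For irreducibility, assume $\gcd(x,y) = 1$ and suppose $t = a b$ with $a, b \in O$ both non-units. Then $|N(a)|, |N(b)| > 1$ and their product $|N(t)|$ is a power of $f$, so each of $|N(a)|, |N(b)|$ is a positive power of $f$. In particular $a \bar{a} = N(a) \in f\Z \subseteq \f$; since $\f$ is prime and $\bar{\f} = \f$ (because $\f = fD$ is visibly stable under conjugation), this forces $a \in \f$. The same argument applies to $b$, giving $t = ab \in \f \cdot \f = \f^2$, which contradicts the second step. Hence $t$ is irreducible. Finally, $t$ is not prime: $tO$ is a principal $\f$-primary ideal while $\f$ is not principal, so $tO \subsetneq \f = \sqrt{tO}$; thus $tO$ is strictly contained in its own radical and cannot be a prime ideal.

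The only real subtlety is the conjugation trick in the irreducibility step, which exploits the primality of $\f$ together with $\bar{\f} = \f$ to pull both factors $a, b$ into $\f$; all other steps are routine computations on the $\Z$-basis $\{1, \o\}$ of $D$.
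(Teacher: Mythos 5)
Your proof is correct and follows essentially the same route as the paper's: norm multiplicativity handles the first two claims, and irreducibility comes from the same contradiction that a factorization $t=ab$ into non-units forces $a,b\in\f$ and hence $t\in\f^2$, while non-primality follows from $tO\subsetneq\f=\sqrt{tO}$ because $\f$ is not principal. The only (harmless) variation is that you deduce $a\in\f$ via the conjugation trick $a\bar a=N(a)\in\f$ with $\overline{\f}=\f$, whereas the paper simply observes that $a$ is itself an $\f$-primary element and therefore lies in $\f$.
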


\begin{proof} The proof of the first two claims of the statement is straightforward, using the properties of the norm. For the last claim, let us assume, for a contradiction, that $t=rs$, where $r,s\in O$, and neither $r$ nor $s$ is a unit in $O$. Since the norm is a multiplicative function on $O$, $r,s$ are $\f$-primary elements. In particular, $r,s\in\f$. But then $t=rs\in\f^2$, contradiction. Moreover, $tO$ is not a prime ideal, since it is strictly contained in the conductor $\f$ (the only prime ideal containing $t$), which is not principal.
\end{proof}
 
Let $t\in O$ be an $\f$-primary element, $t=fx+f\omega y$, $x,y\in\Z$. Note that $t$ is in $fO$ if and only if $f$ divides $y$, since $t=f(x+\o y)$ and $x+\o y \in O$ if and only if $f \mid y$. So, by  (\ref{basicprimitive}), if $y \notin f \Z$ then $t$ is $\f$-basic. Note also that in this case $x,y$ are coprime, since $f$ is the only common prime factor of $x$ and $y$. If $t$ is a basic element and $t\in fO$, then $tO=fO$, that is, $t$ and $f$ are associated in $O$.

However, for a basic element $t$, it is possible that $t\notin fO$, but $\f^2\subset tO\subset \f$. We will see in the next section that this happens precisely when $t$ and $f$ are associated in $D$ but not in $O$ (Lemma \ref{units2}).

\section{Intermediate $\f$-primary ideals}\label{intermediatefprimary}

Throughout this section, given a basic $\f$-primary ideal $Q\subset O$ different from $fO$, by (\ref{generatorprimary}) and (\ref{basicprimitive})  we may suppose that $Q=(f^k,f\alpha)$, where $f^k=N(Q)$ and $\alpha\in D\setminus O$.

The following easy lemma determines whether an ideal of $O$ is a $D$-module or not.  If $I$ is an ideal of $O$ and $ID$ is the extended ideal in $D$, $[ID:I]$ denotes the index of $I$ in $ID$ as abelian groups. 


 \begin{Lemma} \label{units1}\label{index}
Let $I$ be an ideal of $O$. 
\begin{itemize}
\item[i)] If $z I \subseteq I$ for some $z \in D \setminus O$, then $I = ID$. 
\item[ii)] If $I\subset ID$, then $[ID:I]=f$.
\end{itemize} 
\end{Lemma}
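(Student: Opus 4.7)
My plan is to handle both parts by looking at the multiplier ring $(I:I)=\{x\in K:xI\subseteq I\}$. Because $I$ is a finitely generated $O$-module, every element of $(I:I)$ is integral over $O$ and therefore lies in the integral closure $D$, giving the chain $O\subseteq(I:I)\subseteq D$. Since $[D:O]=f$ is prime, there is no proper intermediate ring, forcing $(I:I)=O$ or $(I:I)=D$. This dichotomy is the engine of the proof.

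Part (i) is then immediate: the hypothesis places some $z\in D\setminus O$ inside $(I:I)$, ruling out $(I:I)=O$ and hence forcing $(I:I)=D$; this says $DI\subseteq I$, and combined with $I\subseteq ID$ gives $ID=I$. For part (ii), assume $I\subsetneq ID$. By (i) no $z\in D\setminus O$ stabilizes $I$, so the dichotomy leaves $(I:I)=O$. By the Cox result already recalled in the paper, this is equivalent to $I$ being $O$-invertible, and in that case $I\bar I=N(I)\,O$.

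Extending scalars to $D$ gives $I\bar I D=N(I)\,D$. On the other hand $\overline{ID}=\bar I D$ and $D$ is Dedekind, so the standard identity $J\bar J=|D/J|\cdot D$ applied to $J=ID$ yields $(ID)\,\overline{(ID)}=|D/ID|\cdot D$. Comparing the two expressions for $I\bar I D$ forces $|D/ID|=N(I)=|O/I|$. Combined with the tower identity $|D/I|=[D:O]\cdot|O/I|=f\cdot N(I)$, we conclude $[ID:I]=|D/I|/|D/ID|=f$. The only genuinely non-formal step is the integrality argument placing $(I:I)$ inside $D$; the rest is norm bookkeeping, with the invertibility–norm identity $I\bar I=N(I)O$ imported from the paragraph just preceding Lemma \ref{generator}.
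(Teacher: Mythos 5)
Your proof is correct. Part (i) is essentially the paper's argument: both of you reduce to the dichotomy $(I:I)=O$ or $(I:I)=D$ (you re-derive it from integrality of $(I:I)$ over $O$ and the primality of $[D:O]=f$, whereas the paper simply cites the dichotomy from its preliminaries), and then observe that $z\in(I:I)\setminus O$ forces $(I:I)=D$, i.e.\ $I=ID$.

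For part (ii) you take a genuinely different route. The paper's proof is more elementary: since $fD=\f\subseteq O$, one has $fID\subseteq I\subseteq ID$, both inclusions strict because $I$ is not a $D$-module; as $[ID:fID]=f^2$ and $f$ is prime, $[ID:I]=f$ follows with no mention of invertibility or norms. You instead pass through $(I:I)=O$, invoke the equivalence with $O$-invertibility and the identity $I\bar I=N(I)O$, extend to $D$, and compare with $(ID)\overline{(ID)}=|D/ID|\,D$ to get $|D/ID|=|O/I|$, from which $[ID:I]=f$ drops out of the index tower. Your argument is heavier but buys a genuinely useful byproduct that the paper's does not make explicit, namely that the norm is preserved under extension, $|D/ID|=N(I)$, for invertible $I$. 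One small point of hygiene: the identity $Q\bar Q=N(Q)O$ is recalled in the paper only for \emph{primitive} invertible ideals; for a general invertible $I$ you should note that writing $I=nQ$ with $Q$ primitive gives $I\bar I=n^2Q\bar Q=N(I)O$, so the identity you import does hold in the generality you need. With that remark added, the proof is complete.
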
 

\begin{proof}
i). By the preliminaries of Section \ref{general def results}, $(I:I)$ is equal to $O$ if and only if $I$ is not a $D$-module. Hence, $(I:I)=D$, which proves the claim.
 
ii). Let $\alpha=\sum_i a_i \beta_i\in ID$, for  some $a_i\in I$ and $\beta_i\in D$. Then $f\alpha=\sum_i a_i f\beta_i$ is an element of $I$, since each $f \beta_i$ is in $O$. In particular, $fID\subset I\subset ID$, where the inclusions are strict, since $I$ is not a $D$-module. Since $f$ is a prime number and the index of $fID$ in $ID$ is $f^2$, it follows that the index of $I$ in $ID$ is $f$.
\end{proof}
 
The next proposition characterizes the $\f$-basic ideals of $O$ that are also $D$-modules. This kind of ideals will be crucial in the description of the lattice of $\f$-basic ideals. This result also follows from \cite[p. 34]{BP}. We give a direct proof for the sake of completeness.

\begin{Prop}\label{D-ideal}
Let $Q = (f^k, f \a)$ be a $\f$-basic ideal different from $fO$. Then $Q$ is a $D$-module if and only if $f^{k - 1}$ divides $N(\a)$.
\end{Prop}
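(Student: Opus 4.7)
The plan is to translate ``$Q$ is a $D$-module'' into an equality of $D$-indices, and then extract the divisibility of $N(\alpha)$.

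First I would combine Lemma~\ref{index} with the tower $Q\subseteq QD\subseteq D$. From $[D:O]=f$ and $[O:Q]=f^k$ we get $[D:Q]=f^{k+1}$. Lemma~\ref{index}(ii) forces $[QD:Q]\in\{1,f\}$, whence $[D:QD]\in\{f^k,f^{k+1}\}$, with the larger value attained iff $Q=QD$, i.e.\ iff $Q$ is a $D$-module.

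Next I factor $QD=f^kD+f\alpha D=fD\cdot J$ with $J:=f^{k-1}D+\alpha D$. Multiplicativity of the ideal norm in the Dedekind domain $D$ yields $[D:QD]=[D:fD]\cdot[D:J]=f^2[D:J]$, so the claim reduces to: $[D:J]=f^{k-1}$ iff $f^{k-1}\mid N(\alpha)$. The previous paragraph already gives the a priori bound $[D:J]\le f^{k-1}$.

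To connect $[D:J]$ with $N(\alpha)$ I would invoke the quadratic identity $J\bar J=[D:J]\,D$, and expand
\[ J\bar J \;=\; (f^{2(k-1)},\ f^{k-1}\alpha,\ f^{k-1}\bar\alpha,\ N(\alpha))\,D. \]
For $(\Rightarrow)$, if $[D:J]=f^{k-1}$ then $J\bar J=f^{k-1}D$; since $N(\alpha)=\alpha\bar\alpha\in J\bar J$ and $N(\alpha)\in\Z$, this gives $N(\alpha)\in f^{k-1}D\cap\Z=f^{k-1}\Z$. For $(\Leftarrow)$, assuming $f^{k-1}\mid N(\alpha)$, each of the four displayed generators lies in $f^{k-1}D$, so $J\bar J\subseteq f^{k-1}D$; writing $J\bar J=[D:J]D$ we obtain $f^{k-1}\mid[D:J]$, which together with the a priori bound forces $[D:J]=f^{k-1}$.

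The main delicate point will be the bookkeeping that ties the index $[QD:Q]$ from Lemma~\ref{index} to the ideal norm $[D:QD]$ via the tower, and the clean use of the identity $J\bar J=[D:J]D$ for an ideal of the quadratic Dedekind domain $D$. Once this machinery is in place, both implications fall out of the expansion of $J\bar J$ without any case analysis according to the splitting behaviour of $f$.
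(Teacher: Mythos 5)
Your proof is correct, and it takes a genuinely different route from the paper's. The paper works entirely at the level of the $O$-ideal $Q$: it invokes the dichotomy recorded in its preliminaries, namely that $Q\bar{Q}=N(Q)O$ when $Q$ is $O$-invertible and $Q\bar{Q}=N(Q)fD$ when it is not (equivalently, when $Q$ is a $D$-module), expands $Q\bar{Q}=(f^{2k},f^{k+1}\alpha,f^{k+1}\bar{\alpha},f^{2}N(\alpha))$, and reads off the divisibility of $N(\alpha)$ from the last generator. You instead push everything into the Dedekind domain $D$: you characterize ``$Q$ is a $D$-module'' by the index equality $[D:QD]=[D:Q]=f^{k+1}$ via Lemma \ref{index}, factor $QD=fD\cdot J$ with $J=f^{k-1}D+\alpha D$, and compute $N(J)$ from the standard identity $J\bar{J}=N(J)D$ valid in the maximal order. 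The computational core (conjugate, expand four generators, test divisibility by a power of $f$) is the same in both arguments, but your version trades the $O$-level conjugate-product dichotomy --- a nontrivial fact about non-maximal quadratic orders that the paper cites without proof --- for elementary index bookkeeping plus a wholly standard fact about ideals of $D$. That makes your argument slightly longer but more self-contained; the paper's is shorter given its preliminary apparatus. The individual steps all check out, including the a priori bound $[D:J]\le f^{k-1}$ coming from $[QD:Q]\in\{1,f\}$ and the intersection $f^{k-1}D\cap\Z=f^{k-1}\Z$ used in the forward implication.
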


\begin{proof}
Recall that $Q$ is a $D$-module if and only if $Q$ is not $O$-invertible (see Section \ref{general def results}). We have $Q \bar{Q} = (f^{2k}, f^{k+1} \a, f^{k+1} \bar{\a}, f^2 N(\a))$. If $Q$ is a $D$-module, then $Q \bar{Q} = f^{k+1} D$ and therefore $f^{k-1} \mid N(\a)$. If $f^{k-1} \mid N(\a)$ then $Q \bar{Q} \subset f^{k+1} O$, hence $Q \bar{Q} \ne f^k O$, so $Q$ is not $O$-invertible.
\end{proof}

We describe now the primary ideals lying in between a given $\f$-primary ideal $Q$ and $fQ$, according to whether $Q$ is a $D$-module or not. 

\begin{Thm} \label{intermediate1}\label{intermediate2}
Let $Q = (f^k, f \a)$ be a $\f$-basic ideal different from $fO$. 

\begin{itemize} 

\item[(i)] $\f^k$ is the minimum power of $\f$ contained in $Q$.

\item[(ii)] If $Q$ is a $D$-module, then there are exactly $f+1$ ideals of $O$ lying properly between $Q$ and $fQ$, namely the pairwise distinct ideals
$$
J = (f^k, f^2 \a) ; \ J_a = (f^{k+1}, af^{k} + f\a),  \quad a = 0, 1, \dots, f-1.
$$

\item[(iii)] If $Q \ne QD$, then there is a unique ideal of $O$ lying properly between $Q$ and $fQ$, namely $J = (f^k, f^2 \a) = f QD$.

\end{itemize}
\end{Thm}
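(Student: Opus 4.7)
\medskip\noindent\textbf{Proof plan.}
The idea is to handle (i) by direct computation with the presentations of $Q$ and $\f^k$, then to reduce (ii) and (iii) to a linear-algebra problem inside the quotient $\bar Q := Q/fQ$.

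For (i), the containment $\f^k\subseteq Q$ comes from Lemma~\ref{generator}: writing $f\omega=fr+f\alpha s$ with $r,s\in O$ and multiplying by $f^{k-1}$ puts $f^k\omega$ in $Q$, which together with $f^k\in Q$ yields $\f^k=f^kO+f^k\omega O\subseteq Q$. For $\f^{k-1}\not\subseteq Q$, I use the $\Z$-basis $Q=f^k\Z+f\alpha\Z$ from~(\ref{generatorprimary}): comparing $\omega$-coefficients in any would-be relation $xf^k+yf\alpha=f^{k-1}$ forces $y=0$ and then $xf=1$, which is impossible. Equivalently $Q\cap\Z=f^k\Z$, which already excludes $f^{k-1}$.

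For (ii) and (iii) I pass to $\bar Q=Q/fQ$. Since $Q$ is $\Z$-free of rank $2$, $|\bar Q|=f^2$, and the classes $e_1:=\overline{f^k}$, $e_2:=\overline{f\alpha}$ form an $\mathbb{F}_f$-basis (non-vanishing and $\mathbb{F}_f$-independence are routine, using $Q\cap\Z=f^k\Z$ and $\alpha\notin O$). The $O$-ideals strictly between $fQ$ and $Q$ correspond bijectively to the proper non-zero $O$-submodules of $\bar Q$, and the $O$-action factors through $O/fO\cong\mathbb{F}_f[\xi]/(\xi^2)$ with $\xi$ the class of $f\omega$. The hinge is the equivalence
\[
\xi\bar q=0 \iff f\omega q\in fQ \iff \omega q\in Q,
\]
so $\xi$ acts trivially on $\bar Q$ precisely when $Q=QD$. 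In case~(ii), every line in $\bar Q$ is then $O$-stable, giving the $f+1$ one-dimensional subspaces $\mathbb{F}_f e_1$ and $\mathbb{F}_f(ae_1+e_2)$ ($a=0,\dots,f-1$), which lift, respectively, to $J=(f^k,f^2\alpha)$ and $J_a=(f^{k+1},af^k+f\alpha)$ and are therefore pairwise distinct.

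In case~(iii), $\xi$ is a non-zero nilpotent of rank one on $\mathbb{F}_f^2$, so $\operatorname{Im}\xi=\ker\xi$ is the unique $\xi$-stable line and there is a unique intermediate ideal; since $J=(f^k,f^2\alpha)$ maps to $\mathbb{F}_f e_1\subseteq\ker\xi$ and lies strictly between $fQ$ and $Q$, it must be this ideal. To identify $J$ with $fQD$, I would note that $fD=\f\subseteq O$ gives $fQD\subseteq Q$, while multiplication by $f$ induces an isomorphism $QD/Q\cong fQD/fQ$ of order $f$ (Lemma~\ref{units1}(ii)), so $fQD$ itself lies properly between $fQ$ and $Q$; by uniqueness, $fQD=J$. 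The most delicate step I anticipate is the $\mathbb{F}_f$-independence of $e_1,e_2$, which underlies the counting in both cases but follows cleanly from the same two ingredients used in~(i).
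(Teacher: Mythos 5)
Your argument is correct, and it reaches the paper's conclusions by a partly different route. For (i) you do essentially what the paper does ($\f^k=(f^k,f^k\a)=f^{k-1}\f\subseteq Q$, and $f^{k-1}\notin Q$ because $Q\cap\Z=f^k\Z$), and for the count in (ii) you share the paper's starting point, namely that $Q/fQ\cong(\Z/f\Z)^2$ has exactly $f+1$ proper non-zero subgroups; but where the authors then check by an explicit element computation that $J,J_0,\dots,J_{f-1}$ are pairwise distinct and properly intermediate, you identify them outright with the $f+1$ lines of $\bar Q$, so distinctness comes for free. The genuine divergence is in (iii): the paper uses the dichotomy ``$Q$ not a $D$-module $\Rightarrow Q$ invertible'' and deduces that any intermediate $J$ equals $Q(JQ^{-1})\subseteq Q\f=fQD$, then concludes by the index count $[Q:fQ]=f^2$; you instead read both (ii) and (iii) off the action of $\xi=\overline{f\omega}$ on $\bar Q$ through $O/fO\cong\mathbb{F}_f[\xi]/(\xi^2)$, where a non-zero square-zero operator on $\mathbb{F}_f^2$ has $\ker\xi=\operatorname{Im}\xi$ as its unique stable line. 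Your picture is more uniform and avoids invertibility altogether; the paper's is shorter given that the invertibility dichotomy is already set up in Section \ref{general def results} and is reused elsewhere. Two small points you should make explicit when writing this up: first, $e_1=\overline{f^k}$ does lie in $\ker\xi$ because $f^k\omega\in\f^k\subseteq Q$ by part (i) --- without this the image of $J$ in $\bar Q$ need not be one-dimensional and the identification of the unique intermediate ideal with $J$ would not go through; second, the preimage of the line $\mathbb{F}_f(ae_1+e_2)$ really is the ideal $(f^{k+1},af^k+f\a)$, e.g.\ because $f^2\a=f(af^k+f\a)-af^{k+1}$.
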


\begin{proof}
(i) Recall that $\a \notin O$, since $Q \not\subset f O$, so that $\f = (f, f\a)$ (Lemma \ref{generator}). Then $Q \supseteq (f^k, f^k \a) = f^{k-1} \f = \f^k$, where the equality holds if and only if $k = 1$. Since $f^{k-1} \in \f^{k - 1} \setminus Q$, $k$ is the minimal integer such that $\f^k\subseteq Q$.

(ii) Let $\a = a_1 + \o a_2$, where $a_2 \notin f \Z$, since $\a \notin O$. Since $Q/fQ \cong \Z/f \Z \oplus \Z/f \Z$ (as abelian groups) and $\Z/f \Z \oplus \Z/f \Z$ has exactly $f + 1$ proper non-zero subgroups, it suffices to show that the ideals $J$, $J_a$ ($a = 0, \dots, f-1$) are pairwise distinct and lie properly between $Q$ and $f Q$.

It is clear that the ideals $J$, $J_a$, $0 \le a \le f-1$ lie between $Q$ and $fQ = (f^{k+1}, f^2 \a)$. We firstly verify that these ideals are pairwise distinct. 

Let us suppose that $J_a = J_b$. Then we get the equality
$$
f(af^{k-1} + \a) = (x_0 + x_1 f \o) f^{k+1} + (y_0 + y_1 f \o)(f(bf^{k-1} + \a)) .
$$ 
for suitable $x_0, x_1, y_0, y_1 \in \Z$.
It follows that
$$
af^{k-1} + \a - x_0  f^{k} - y_0(bf^{k-1} + \a) \in  \o Q \subseteq Q,
$$
where $\o Q\subseteq Q$ since $Q$ is a $D$-module. The above relation yields $(1 - y_0)\a \in O$, so $1 - y_0 \in f \Z$, since $a_1 \notin f \Z$. Then we get $af^{k-1}  - y_0bf^{k-1} \in Q$, hence $a - y_0 b \in f \Z$, by the minimality of $k$. We conclude that
$$
1 \equiv y_0, \quad a \equiv y_0 b \quad {\rm mod} \ f,
$$
so $a \equiv b$ modulo $f$, and therefore $a = b$, since these integers both lie in $\{0, 1, \dots , f-1 \}$. We remark that we have actually proved that $J_a \not\subset J_b$ whenever $a \ne b$.

Since $J_a \not\subset fO$, for every $a \le f-1$, we get $J_a \ne J \subset f O$, and $J_a \supset f Q$. Moreover $Q \supset J$, since $Q \not\subset fO$, and $J \supset fQ$, since $f^{k-1} \notin Q$ yields $f^k \in J \setminus fQ$. 

It remains to show that $J_a \ne Q$, for $a = 0, \dots, f-1$. Assume, for a contradiction, that $J_b = Q$ for some $b \le f-1$. Then we get $J_a \subseteq Q = J_b$ for every $a \ne b$, which is impossible, as remarked above. 

(iii) Under the present circumstances, we get $Q \supset f QD \supset f Q$, since $Q$ is not a $D$-module. Let $J$ be an $\f$-primary ideal properly lying between $Q$ and $fQ$. Since $Q$ is not a $D$-module, $Q$ is an invertible $O$-ideal (see Section \ref{general def results}). Therefore, $I = J Q^{-1}$ is an $\f$-primary ideal of $O$, so we get $J = QI \subseteq Q \f = f QD$. Hence, we actually get the equality $J = fQD$, since $[Q: f Q] = f^2$. In particular, $J = (f^k, f^2 \a)$.
\end{proof}

In particular, the preceding theorem allows us to determine the ideals lying between $\f$ and $\f^2$, since $\f$ is a $D$-module and $\f^2 = f \f$.

In the next lemma, we determine the intermediate ideals that are principal, or, equivalently, the basic elements $t\in O$ such that $\f^2\subset tO\subset \f$.  
 
 \begin{Lemma} \label{units2}
 A principal ideal $tO$ lies properly between $\f$ and $\f^2$ if and only if $t = f w$, for a suitable unit $w$ of $D$. Moreover $f w O = f w' O$ if and only if $w/w' \in O$.
 \end{Lemma}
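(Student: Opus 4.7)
The plan is to handle the two claims separately. For the first equivalence, I would start from the index computation $[\f:\f^2] = N(\f^2)/N(\f) = f^3/f = f^2$ (using $N(\f^k) = f^{2k-1}$ from Section \ref{general def results}). If $tO$ lies strictly between $\f$ and $\f^2$, then $[\f:tO]$ and $[tO:\f^2]$ are proper divisors of $f^2$ multiplying to $f^2$, and primality of $f$ forces each to equal $f$. Hence $N(tO) = f \cdot N(\f) = f^2$. Writing $t = f\alpha$ with $\alpha \in D$ (permitted since $t \in \f = fD$), the multiplicativity of the norm on $O$-principal ideals gives $f^2 = |N(t)| = f^2 |N(\alpha)|$, so $|N(\alpha)| = 1$ and $\alpha \in D^*$; set $w = \alpha$.

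For the converse, given $t = fw$ with $w \in D^*$, I would verify the two strict inclusions directly. The containment $fwO \subseteq fD = \f$ is immediate, and it is strict since $\f$ is not principal. For $\f^2 \subseteq fwO$, note that $\f^2 = f^2 D$, and the inclusion $f^2 D \subseteq fwO$ reduces, after canceling $f$ and multiplying by the $D$-unit $w^{-1}$, to $fD \subseteq O$, which is the defining property of the conductor. Strictness follows because $fw \notin f^2 D$: otherwise $w \in fD$, contradicting $|N(w)| = 1$ since every nonzero element of $fD$ has $D$-norm divisible by $f^2$.

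For the moreover claim, I would use the standard fact that $fwO = fw'O$ if and only if $fw$ and $fw'$ are associates in $O$, i.e., $w = w' u$ for some $u \in O^*$; this is equivalent to $w/w' \in O^*$. Since $w, w' \in D^*$ the ratio $w/w'$ automatically lies in $D^*$, and so has norm $\pm 1$. Now any element of $O$ whose norm is $\pm 1$ is a unit of $O$, so the weaker condition $w/w' \in O$ already forces $w/w' \in O^*$, yielding the desired equivalence.

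I do not foresee a serious obstacle here: the argument is essentially bookkeeping with norms and indices. The only subtlety worth flagging is the use of two facts already established earlier in the paper, namely that $\f$ is not principal (which provides the strictness $tO \subsetneq \f$ for free) and that $\f^k = f^{k-1}\f$, so that $\f^2 = f^2 D$, which is what makes the inclusion $\f^2 \subseteq fwO$ reduce cleanly to the conductor property.
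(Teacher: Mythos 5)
Your argument is correct and follows essentially the same route as the paper: both hinge on the index computation $[\f:\f^2]=f^2$ to pin down $t$ up to a unit of $D$, and on the conductor property $fD\subseteq O$ (the paper phrases the forward direction via the extended ideal $tD=\f$ rather than via $N(tO)=f^2$, and gets the converse by multiplying the chain $\f\supset fO\supset\f^2$ by $w$, but these are cosmetic differences). Your treatment of the ``moreover'' clause, which the paper dismisses as immediate, is also correct, the key point being that an element of $O$ of norm $\pm 1$ is a unit of $O$.
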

 
 \begin{proof}
Assume that $\f \supset t O \supset \f^2$. The extended ideals satisfy $\f \supseteq t D \supset \f^2$, where the second containment is strict, since $t D \supset tO \supset \f^2$. Since $|\f/\f^2| = f^2$, we get $t D = \f = f D$, which is possible only if $t = f w$ for some unit $w$ of $D$. Conversely, for every unit $w$ of $D$, from $\f \supset fO \supset \f^2$ we get $w \f = \f \supset f w O \supset w \f^2 = \f^2$. The last statement is immediate. 
 \end{proof}

In particular, Lemma \ref{units2} implies that the number of principal $\f$-primary ideals between $\f$ and $\f^2$ is equal to $|D^*/O^*|$. This last quantity depends on how the prime $f$ splits in $D$.
 
\begin{Prop} \label{three cases}

Let $\tau = |D^*/O^*|$. Then we have
\begin{itemize}
\item[i)] if $f$ is inert in $D$, then $\tau \mid f+1$.

\item[ii)] if $f$ is split in $D$, then $\tau \mid f-1$.

\item[iii)] if $f$ is ramified in $D$, then $\tau \mid f$.
\end{itemize}
\end{Prop}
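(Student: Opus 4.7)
The plan is to produce a natural injection of $D^*/O^*$ into a finite group whose order can be read off from the splitting behavior of $f$ in $D$. First I would consider the reduction map $\pi\colon D \to D/fD$. Since the conductor is $\f = fD$ and $O = \Z + fD$, the preimage $\pi^{-1}(\Z/f\Z)$ equals $O$. In each of the three splitting cases the composite $\Z \to D \to D/fD$ factors through $\Z/f\Z$ and lands inside $(D/fD)^*$ on nonzero residues (because $D/fD$ is a field of order $f^2$, or $\mathbb{F}_f \times \mathbb{F}_f$, or a local ring with residue field $\mathbb{F}_f$; in none of these cases do nonzero integers become zero divisors). I would then define
$$
\phi\colon D^* \longrightarrow (D/fD)^*/(\Z/f\Z)^*, \qquad u \longmapsto \bar u \cdot (\Z/f\Z)^*.
$$

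Next I would show $\ker\phi = O^*$. The inclusion $O^* \subseteq \ker\phi$ is clear from $O = \Z + fD$. Conversely, if $u \in D^*$ satisfies $\bar u \in (\Z/f\Z)^*$, then $u \in O$; to promote this to $u \in O^*$, use that $N(u) = u\bar u = \pm 1$, so $u^{-1} = \pm\bar u$, and the order $O$ is stable under conjugation (since $\bar\omega \in \{-\omega,\,1-\omega\}$ according to $d \bmod 4$), whence $u^{-1} \in O$. This yields an injection $D^*/O^* \hookrightarrow (D/fD)^*/(\Z/f\Z)^*$, so $\tau$ divides $|(D/fD)^*|/(f-1)$.

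The three cases of the statement then follow from a direct count of $(D/fD)^*$. In the inert case $D/fD \cong \mathbb{F}_{f^2}$ has $f^2-1 = (f-1)(f+1)$ units, so $\tau \mid f+1$. In the split case, the Chinese Remainder Theorem gives $D/fD \cong \mathbb{F}_f \times \mathbb{F}_f$ with $(f-1)^2$ units, so $\tau \mid f-1$. In the ramified case, $D/fD$ is a local ring of cardinality $f^2$ whose maximal ideal has $f$ elements, producing $f^2 - f = f(f-1)$ units and thus $\tau \mid f$.

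There is no genuinely hard step here; the whole proposition collapses once the correct homomorphism is identified and the order of $(D/fD)^*$ is computed in each splitting type. The only point requiring a moment's care is the equality $O \cap D^* = O^*$ — a priori a unit of $D$ lying in the order need not be invertible in the order, but the identity $u^{-1} = \pm \bar u$ combined with conjugation-stability of $O$ settles the matter.
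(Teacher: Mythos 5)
Your proof is correct and follows essentially the same route as the paper: reduce modulo $\f=fD$, inject $D^*/O^*$ into $(D/fD)^*/(O/\f)^*$ (your $(\Z/f\Z)^*$ is the same group), and count units of $D/fD$ in the three splitting types. You even fill in a small point the paper passes over silently, namely that $u\in D^*\cap O$ forces $u\in O^*$ via $u^{-1}=\pm\bar u$ and conjugation-stability of $O$.
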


\begin{proof}
Since $f$ is prime, $O/\f\cong \mathbb{F}_f$, the finite field with $f$ elements. In particular, the group of units of $O/\f$ has cardinality $f-1$.  The residue ring $D/\f$ is isomorphic either to $\mathbb{F}_{f^2}$ (inert case), $\mathbb{F}_{f}\times \mathbb{F}_{f}$ (split case) or to a finite local ring with  principal maximal ideal (ramified case).  In each of the three cases, the group of units of $D/\f$ has cardinality equal to $f^2-1$, $(f-1)^2$ and $f^2-f$, respectively.

The canonical ring homomorphism $\pi:D\twoheadrightarrow D/\f$ induces a group homomorphism $\pi^*: D^*\to (D/\f)^*$ (which is not necessarily surjective). We have an induced  group homomorphism: $D^*/O^*\to(D/\f)^*/(O/\f)^*$, $u+O^*\mapsto \pi^*(u)+(O/\f)^*$. 
We claim that the latter group homomorphism is injective. In fact, if $\pi^*(u)\in (O/\f)^*$, then $\pi(u)\in O/\f$, so we get $u \in O^*$, since $\pi^{-1}(O/\f)=O$. It follows that $\tau = |D^*/O^*|$ divides the cardinality of ${(D/\f)^*}/{(O/\f)^*}$, which in the three cases is equal to: i) $f+1$ (inert), ii) $f-1$ (split), iii) $f$ (ramified).
\end{proof}

\begin{Rem}\label{numberintermediateDmodules}
We note that the same conclusion of Proposition \ref{three cases} can be obtained by means of a well-known formula that gives the class number of $O$ in terms of the class number of $D$ (see \cite[p. 146-148]{Cox}). By Theorem \ref{intermediate2}, there are $f+1$ ideals properly lying between $\f$ and $\f^2$. In each of the three cases mentioned above, the number of these intermediate ideals of $O$ that are $D$-modules is:
\begin{itemize}

\item[i)] inert case: there is no intermediate $D$-module, since  there are no $D$-modules between $\f=P$ and $\f^2=P^2$.

\item[ii)] split case: $2$; the only $D$-modules between $\f=P\overline{P}$ and $\f^2=P^2\overline{P}^2$ are $P^2\overline{P}$ and $P\overline{P}^2$.

\item[iii)] ramified case: $1$; the only $D$-module between $\f=P^2$ and $\f^2=P^4$ is $P^3$.

\end{itemize}
Hence, $\tau = |D^*/O^*|$ divides the number of ideals properly between $\f$ and $\f^2$ that are not $D$-modules ($f+1$, $f-1$ and $f$, resp.), and this last number is equal to the cardinality of ${(D/\f)^*}/{(O/\f)^*}$. 
\end{Rem}

This last fact is an evidence of the following general result. We recall that an action of a group $G$ on a set $S$ is free if the stabilizer of each element $s\in S$ is trivial, that is, ${\rm Stab}(s)=\{g\in G \mid gs=s\}=\{1\}$.

\begin{Prop}\label{freeaction}
The multiplicative group $(D/\f)^*/(O/\f)^*$ acts freely on the set of the ideals $I$ of $O$ that lie properly between $\f$ and $\f^2$ and are not $D$-modules. 
\end{Prop}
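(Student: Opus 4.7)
The plan is to define an action of $(D/\f)^*$ on the set of $O$-ideals $I$ with $\f^2\subset I\subset\f$ by
\[\bar u\cdot I \;:=\; uI+\f^2,\]
where $u\in D$ is any lift of $\bar u$, and then to verify in turn that this action (a) descends to the quotient $(D/\f)^*/(O/\f)^*$, (b) preserves the subset $S$ of non-$D$-module intermediate ideals, and (c) is free on $S$.

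The routine verifications all rely on the single containment $\f I\subseteq\f\cdot\f=\f^2$: two lifts of $\bar u$ differ by an element of $\f$, hence produce the same ideal $uI+\f^2$; associativity and the identity axiom follow in the same way; and $uI+\f^2$ is itself an $O$-ideal, contained in $u\f+\f^2\subseteq\f\subseteq O$. Moreover both containments $\f^2\subset uI+\f^2\subset\f$ are strict, because multiplication by $\bar u$ is an automorphism of the rank-one free $D/\f$-module $\f/\f^2\cong D/\f$ and therefore sends a proper nonzero $O$-submodule to a proper nonzero $O$-submodule (the $O$-structure is preserved since $O\subseteq D$).

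For (a), given $\bar u\in(O/\f)^*$ pick $u,v\in O$ with $uv=1+x$, $x\in\f$; then $uI\subseteq OI\subseteq I$, and conversely any $i\in I$ satisfies $i=uvi-xi\in uI+\f I\subseteq uI+\f^2$, so $\bar u\cdot I=I$. For (b), the analogous computation $uJ\subseteq DJ=J$ (and likewise for a lift of $\bar u^{-1}$) shows that every $D$-ideal $J$ between $\f$ and $\f^2$ is a fixed point of the action; hence $S$ is invariant, since $\bar u(I)$ being a $D$-module would force $I=\bar u^{-1}(\bar u(I))$ to be a $D$-module too.

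The crux is (c). Suppose $\bar u$ stabilizes some $I\in S$, so $uI+\f^2=I$ and in particular $uI\subseteq I$. If $u\in O$, then the class of $\bar u$ in $(D/\f)^*/(O/\f)^*$ is trivial. If instead $u\in D\setminus O$, Lemma~\ref{units1}(i) gives $I=ID$, contradicting $I\in S$. Hence every stabilizer is trivial and the action is free. The main (essentially only) obstacle is the clean translation from the fixed-set condition $\bar u\cdot I=I$ to the containment $uI\subseteq I$, which is precisely the hypothesis of Lemma~\ref{units1}(i); everything else is bookkeeping.
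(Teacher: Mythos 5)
Your proof is correct and follows essentially the same route as the paper: you define the multiplication action on the intermediate ideals (modulo $\f^2$), observe that $(O/\f)^*$ acts trivially and that the $D$-modules are exactly the fixed points, and reduce freeness to Lemma~\ref{units1}(i). The only difference is presentational --- you carry the term $+\,\f^2$ explicitly rather than passing to classes $[I]$ in $\f/\f^2$ as the paper does.
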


\begin{proof}
Let $\mathcal{I}$ be the set of ideals of $O$ lying properly between $\f$ and $\f^2$. The set $\mathcal{I}$ is in one-to-one correspondence with the set $[\mathcal{I}]$ of proper non-zero ideals of $O/\f^2$, by the canonical map $I\mapsto I+\f^2=[I]$. Recall that $\f/\f^2$ is in a natural way a $(D/\f)$-module, and so also a $(O/\f)$-module.

For any assigned $[z]\in (D/\f)^*$ and $[I]\in [\mathcal{I}]$, we set $[z]\cdot[I]=[zI]$. Since $[I]$ is a $O/\f$-module contained in $\f/\f^2$, it is straightforward to see that $[zI]$ is also a $O/\f$-module, contained in $[z]\cdot \f/\f^2=\f/\f^2$, where the last equality holds since $[z]$ is a unit in $D/\f$. We have thus defined an action of $(D/\f)^*$ on $[\mathcal{I}]$. In particular, every element $[I]$ of $[\mathcal{I}]$ is fixed by the elements of the subgroup $(O/\f)^*\subset (D/\f)^*$, i.e., $[z] \cdot [I] = [I]$, for every $[z]\in (O/\f)^*$. Hence we have an induced natural action of the group $G=(D/\f)^*/(O/\f)^*$ on $[\mathcal{I}]$. We can partition $\mathcal{I}$ into the union of the subset $\mathcal{I}_D$ of the ideals that are also $D$-modules and the complementary subset $\mathcal{I}_O$. The set $[\mathcal{I}]$ is therefore partitioned by the natural map into the union of the set $[\mathcal{I}]_{D/\f}$ of $O/\f$-modules which are also $D/\f$-modules and the subset $[\mathcal{I}]_{O/\f}$ of $O/\f$-modules which are not $D/\f$-modules. 
By Lemma \ref{units1}, for any assigned $I\in \mathcal{I}_O$ and $z\in D\setminus O$, we get $zI\not\subset I$. Hence, the sets $[\mathcal{I}]_{D/\f}$ and $[\mathcal{I}]_{O/\f}$ are characterized as follows:
\begin{align*}
[\mathcal{I}]_{D/\f}=&\{\bar I\in [\mathcal{I}] \mid \forall g\in G, g\cdot [I]=[I]\}\\
[\mathcal{I}]_{O/\f}=&\{\bar I\in [\mathcal{I}] \mid \forall g\in G, g\not=1, g\cdot [I]\not=[I]\}.
\end{align*}
Then $[\mathcal{I}]_{D/\f}$ is precisely the subset of $[\mathcal{I}]$ of the fixed elements under the action of $G$ and $[\mathcal{I}]_{O/\f}$ is the subset of elements whose stabilizer under the action of $G$ is trivial. We conclude that $G$ acts freely on the subset $[\mathcal{I}]_{O/\f}$. 
\end{proof}

By the above proposition, the cardinality of $G$ divides the cardinality of $[\mathcal{I}]_{O/\f}$. However, in the present case where the conductor is $fD$, $f\in\Z$ a prime number, we know by the above discussion that the two cardinalities coincide in all the three possible cases, inert, split and ramified.

\section{The lattice of basic ideals}\label{lattice}
 
In the present section we analyze separately the lattice of $\f$-basic ideals, in each of the three cases that may appear, namely: $f$ inert, split or ramified in $D$, respectively.

\subsection{Inert case} 
 
The next theorem gives a complete description of the lattice of the $\f$-basic ideals of $O = \Z [f \o]$, in the case when $f$ is a prime element of $D = \Z [\o]$. 

\begin{Thm} \label{f-prime1}
Suppose $\f=fD$ is a prime ideal of $D$. 
Then every basic $\f$-primary ideal of $O$ contains $\f^2$, and lies in the following set of pairwise distinct ideals
$$
\mathcal J = \{ (f, f^{2}  \o) , (f^{2}, f(a + \o)) : 0 \le a < f \}.
$$

\end{Thm}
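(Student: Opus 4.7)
Plan: The aim is to show that every $\f$-basic ideal $Q$ satisfies $\f^2\subseteq Q$, so that $Q$ lies in the finite interval $[\f^2,\f]$ of the ideal lattice, and then to enumerate that interval via Theorem \ref{intermediate2} applied to $\f$ itself.

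For the containment, I would extend $Q$ to an ideal $QD$ of the Dedekind domain $D$. Since $f$ is inert, the unique prime of $D$ above $f$ is $\f$, so $QD=\f^j$ for some $j\geq 1$; if $j\geq 2$ then $Q\subseteq QD\subseteq \f^2$, contradicting basicness, so $QD=\f$. By Lemma \ref{units1}(ii) either $Q=\f$, or $[\f:Q]=f$ and hence $N(Q)=[O:\f]\cdot[\f:Q]=f^2$. In the latter case, the key observation is that every additive subgroup of index $f$ in $\f$ (viewed as a rank-$2$ free $\Z$-module) automatically contains $f\f$, because the cyclic quotient $\f/Q$ of order $f$ is killed by $f$; in particular $\f^2=f\f\subseteq Q$.

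For the enumeration, I would apply Theorem \ref{intermediate2}(ii) to the basic $D$-module $\f=(f,f\o)$, taking $k=1$ and $\a=\o$ in the notation there. This produces exactly $f+1$ pairwise distinct ideals lying properly between $\f$ and $f\f=\f^2$, namely
$$
J=(f,f^2\o) \quad\text{and}\quad J_a=(f^2,\,af+f\o)=(f^2,\,f(a+\o)),\qquad a=0,1,\dots,f-1;
$$
a direct check gives $J=fO$, since $f^2\o=f(f\o)\in fO$, so these $f+1$ ideals are precisely the elements of $\mathcal J$. By the previous step, any $\f$-basic ideal of norm $f^2$ must sit properly between $\f^2$ and $\f$, hence is one of these; together with $\f$ itself (the unique ideal of $O$ of norm $f$), this exhausts the $\f$-basic ideals.

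The delicate step is the index-$f$ argument of the first paragraph, where one must rule out an ``exotic'' index-$f$ $\Z$-sublattice of $\f$ that could avoid $\f^2$. The cyclicity of the quotient $\f/Q$ resolves this automatically, and once that is in place the rest is essentially a direct reading off of Theorem \ref{intermediate2}(ii).
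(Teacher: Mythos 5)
Your proof is correct and follows essentially the same route as the paper: extend $Q$ to $QD$, use that $\f$ is the only prime of $D$ above $Q$ together with $Q\not\subset\f^2$ to get $QD=\f$, deduce $\f^2=f\f\subseteq Q$ (you via the cyclic index-$f$ quotient, the paper via the containment $fQD\subseteq Q$ already established in the proof of Lemma \ref{index}), and then read off the interval $[\f^2,\f]$ from Theorem \ref{intermediate2}(ii). Your explicit remark that $\f$ itself must be adjoined to $\mathcal J$ is a small but accurate refinement of the statement as printed.
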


\begin{proof}
Let $Q$ be a basic ideal. The extended ideal $QD$ is equal to $\f$, since $Q$ is $\f$-primary ($\f$ is the only prime ideal of $O$ that contains $Q$, hence the only prime ideal of $D$ that contains $Q$) and $Q$ is not contained in $\f^2$ by definition. It follows by Lemma \ref{index} that $fQD=f\f=\f^2\subset Q$. By Theorem \ref{intermediate2}, $Q$ lies in the set $\mathcal J$.
\end{proof}

The number of principal basic $\f$-primary ideals is exactly equal to the number of distinct non-associated basic elements of $O$,  which is equal to $|D^*/O^*|$, by Lemma \ref{units2}. Moreover, their norm is equal to $f^2$, since the ideal they generate lies in between $\f$ and $\f^2$. 

The following diagram represents the lattice of $\f$-primary ideals in the inert case. We recall that  only the powers of $\f$ are $D$-modules (see Remark \ref{numberintermediateDmodules}). For this reason, all the proper intermediate ideals are $O$-invertible (see Section \ref{general def results}).
$$\xymatrix@-1.4pc{
&&O&&\\
&&\f\ar@{-}[u]&&\\
J_0\ar@{-}[urr]&\ldots\ar@{-}[ur]&fO\ar@{-}[u]&\ldots\ar@{-}[ul]&J_{f-1}\ar@{-}[ull]\\
&&\f^2\ar@{-}[u]\ar@{-}[ul]\ar@{-}[ur]\ar@{-}[ull]\ar@{-}[urr]&&\\
fJ_0\ar@{-}[urr]&\ldots\ar@{-}[ur]&f^2O\ar@{-}[u]&\ldots\ar@{-}[ul]&f J_{f-1}\ar@{-}[ull]\\
&&\f^3\ar@{-}[u]\ar@{-}[ul]\ar@{-}[ur]\ar@{-}[ull]\ar@{-}[urr]\ar@{-}[d]\ar@{-}[dl]\ar@{-}[dr]\ar@{-}[dll]\ar@{-}[drr]&&\\
\ldots&\ldots&\ldots&\ldots&\ldots
}
$$

\subsection{Split case}

Throughout this section, we assume that $\f = f D$ splits as an ideal of $D$, say $f D = P \bar P$, where $P \ne \bar{P}$ are prime ideals of $D$ of norm $f$, both of which lie above $\f$, considered as an ideal of $O$. Note that $P$ is principal if and only if $f$ is not irreducible in $D$ (recall that $f$ is always irreducible in $O$, by Proposition \ref{primitiveprimary}). However, some power of $P$ is a principal ideal of $D$, since the class group of $D$ is finite. For the remainder of this section, we will denote by $m$ the order of $P$ in the class group of $D$ (i.e., the minimum power $m$ of $P$ such that $P^m$ is principal), and by $\b\in D$ a fixed generator of $P^m$.

\begin{Lemma} \label{beta}
In the above notation, $\beta^n \notin O$ for every $n > 0$.
\end{Lemma}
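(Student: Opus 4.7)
The plan is to argue by contradiction, leveraging the hypothesis of the subsection that in the split case $P$ and $\bar P$ are distinct primes of $D$ both lying above the maximal ideal $\f$ of $O$. Concretely, I will show that $\beta^n \in O$ would force $\beta^n$ to lie in $\f$ and simultaneously not to lie in $\f$.

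For the first side: since $\beta$ generates $P^m$ in $D$, one has $\beta^n D = P^{mn}$, hence in particular $\beta^n \in P$. Combining this with the hypothetical membership $\beta^n \in O$ and with $P \cap O = \f$ (which is the content of $P$ lying over $\f$, as recorded at the start of the subsection), I would conclude $\beta^n \in \f$.

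For the second side, I would translate membership of $\beta^n$ in $\f$ into containment of principal ideals of $D$: $\beta^n \in \f$ is equivalent to $\beta^n D \subseteq \f$, i.e., $P^{mn} \subseteq P\bar P$. Unique factorization of ideals in the Dedekind domain $D$ then forces $\bar P \mid P^{mn}$, and since $\bar P$ is prime this means $\bar P = P$, contradicting the standing split hypothesis $P \neq \bar P$.

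The only step that requires a brief justification is the identification $P \cap O = \f$, which follows from $\f = fD \subseteq P$ together with the maximality of $\f$ in the one-dimensional ring $O$; apart from that, the argument is a short ideal-theoretic dichotomy and no computation with bases, norms, or explicit coordinates is needed.
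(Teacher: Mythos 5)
Your argument is correct and is essentially the paper's own proof: both derive $\beta^n\in O\cap P=\f$ and then use $P^{mn}\subseteq P\bar P$ together with primality of $\bar P$ (equivalently, $P\subseteq\bar P$) to contradict $P\neq\bar P$. Your added justification of $P\cap O=\f$ via maximality of $\f$ is a harmless elaboration of a step the paper takes for granted.
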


\begin{proof}
Assume, for a contradiction, that $\beta^n \in O$. Then $\beta^n \in O\cap P=\f$. It follows that $\beta^n D = P^{mn} \subseteq \f = P \bar P \subset \bar P$, whence $P \subseteq \bar P$, impossible.
\end{proof}

The following theorem describes all the $\f$-basic elements of $O$: it turns out that they are associated to the elements $t_n=f\beta^n$, for some $n\in\N$. In particular, in the split case, unlike the inert case, there are basic elements of arbitrary large norm, so, they are infinitely many.
 
\begin{Thm}\label{classificationbasicfprimaryelements}
For each $n\in\N$, let $t_n = f \b^n$. An element $t\in O$ is basic if and only of $t$ is associated in $D$ either to $t_n$ or its conjugate, for some $n\in\N$. Moreover, the principal ideals $t_n w O, \bar{t_n}w' O$, for $n>0$ and $w,w'\in D^*$, $w/w'\notin O$, are pairwise incomparable and do not contain $\f^2$.  
\end{Thm}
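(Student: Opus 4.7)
The plan is to translate the basic condition on $t\in O$ into a condition on the prime factorization of $tD$ in $D$, then exploit the splitting $fD=P\bar P$ together with the definition of $m$.

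First I would reduce basicity to a statement about exponents. Using $\f=fD$ and $\f^2=f\f=f^2D$, the condition $t\in\f$ becomes $tD\subseteq P\bar P$, and $t\notin\f^2$ becomes $tD\not\subseteq P^2\bar P^2$. Since $tO$ is $\f$-primary, $P$ and $\bar P$ are the only primes of $D$ containing $t$ (being the only ones above $\f$), so $tD=P^a\bar P^b$ for unique $a,b\geq 1$, and $t$ is basic if and only if $\min(a,b)=1$.

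For the $(\Leftarrow)$ direction of the characterization, a direct computation gives $t_nD=fD\cdot\beta^nD=P^{mn+1}\bar P$, which has $b=1$, so $t_n$ (and by conjugation $\bar{t_n}$) is basic. For $(\Rightarrow)$, assume WLOG $b=1$. Then $tD=P^a\bar P=P^{a-1}\cdot fD$, so $P^{a-1}$ is a principal ideal; by the defining property of $m$ this forces $m\mid a-1$, and writing $a=mn+1$ gives $P^{a-1}=(\beta^n)$ and $tD=(f\beta^n)=t_nD$, whence $t=u\,t_n$ for some $u\in D^*$. The case $a=1$ yields $t$ associated to $\bar{t_n}$ by conjugation.

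For the pairwise incomparability, an inclusion $t_nw\,O\subseteq t_{n'}w'\,O$ forces the element $s:=\beta^{n-n'}w/w'\in O$. Valuation bookkeeping gives $v_{\bar P}(sD)=0$ and $v_P(sD)=m(n-n')$, which kills the cases $n<n'$ (and the cross-family comparison with $\bar{t_{n'}}w'\,O$) by making $s$ have negative valuation at $P$ or $\bar P$. The harder case $n>n'$ rests on the key observation that \emph{if $s\in D$ has $v_P(s)\geq 1$ and $v_{\bar P}(s)=0$, then $s\notin O$}; this holds because $O\cap P$ is a prime of $O$ above $f$, hence equal to $\f=P\bar P$, so any $s\in O\cap P$ would satisfy $v_{\bar P}(s)\geq 1$. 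For $n=n'$, incomparability of distinct ideals reduces to $D^*\cap O=O^*$, which one verifies via $\bar u=\mathrm{Tr}(u)-u\in O$ when $u\in O$, giving $u^{-1}=\pm\bar u\in O$. The non-containment $\f^2\not\subseteq t_nw\,O$ is obtained by the same bookkeeping: writing $f^2=t_nws$ yields $sD=P^{1-mn}\bar P$, forcing $mn\leq 1$; the borderline $m=n=1$ is killed by the $O\cap P=\f$ argument once more. I expect the interplay of valuations at $P,\bar P$ with the identification $O\cap P=\f$ to be the main technical content.
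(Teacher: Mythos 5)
Your proposal is correct and follows essentially the same route as the paper's proof: both reduce basicity to the factorization $tD=P^a\bar P^b$ with $\min(a,b)=1$, use the order $m$ of $P$ in the class group to identify $tD$ with $t_nD$ up to units and conjugation, and rule out the unwanted containments via the observation that an element of $O$ lying in $P$ must lie in $O\cap P=\f=P\bar P$ (the paper packages this as the lemma that $\b^n\notin O$). Your valuation bookkeeping is just a more explicit rendering of the paper's norm and divisibility arguments, and it even treats the borderline case $m=n=1$ of the non-containment $\f^2\not\subseteq t_nwO$ more carefully than the paper does.
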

 
\begin{proof}
Since $N(t_n) = f^2 N(\b^n) = f^{mn + 2}$, every  element $t_n$ is $\f$-primary. Moreover, note that $ t_n \notin \f^2 = f \f$, since $t_n/f = \b^n \notin \f$, so that $t_n$ is $\f$-basic, for every $n \ge 0$. Pick now two distinct non-negative integers $n$, $m$, with $n = m + h$, $h > 0$. Since $t_n/ t_m = \b^h \notin O$ and $t_m/ t_n = \b^{-h} \notin O$, it follows that the ideals $t_n O$, for $n \ge 0$, are pairwise incomparable. Finally, since $t_n$ has norm strictly greater than $f^2$, for $n>0$, $\f^2$ is not contained in $t_n O$.

Conversely, let $t$ be a basic element of $O$ of norm $f^{s +2}$, $s \geq 0$. Since $t$ is $\f$-basic, $P, \bar P$ are the only prime ideals of $D$ above $t D$. Then we get
$$
tD=P^k  {\bar P}^h, \quad h, k > 0.
$$
 Moreover, since $t \notin \f^2 = P^2 {\bar P}^2$, the integers $h, k$ are not both $> 1$. Let us assume that $h = 1$, whence $t D = f P^{k-1}$. Then $P^{k-1}$ is principal, hence $k - 1 = mn$, for some positive integer $n$. It follows that $N(t) = f^{s + 2} = f^2 N(P^{mn}) = f^{mn + 2}$, so, $s = m n$.  Now, we have $ t D = fP^{mn} = f \b^n D = t_n D $, which is possible only if $t = t_n w$, for some $w \in D^*$. In the case $k = 1$ we symmetrically get $t = {\bar t}_n w$ for some $w \in D^*$.

Finally, if $t_h w O = {\bar t_k} w' O$, then $h=k$ otherwise $t_h, t_k$ have different norms and we get that some power of $\b$ is in $O$, which is impossible by Lemma \ref{beta}. Moreover, $t_h w O = t_k w' O$ implies $h = k$ as before, hence we also get $w /w' \in O$.
\end{proof}

Our next step is to classify the non-principal basic $\f$-primary ideals.

We recall that a Special PIR (Special Principal Ideal Ring) $R$ is a principal ideal ring with a unique prime ideal $M$, such that $M$ is nilpotent (see \cite[p. 245]{ZS}). So, in the case when $M = pR$, for some $p\in R$, we get $p^n=0$ for some $n>1$. Note that a Special PIR is a chained ring, i.e., the ideals are linearly ordered.

The next lemma gives all the basic $\f$-primary ideals that contain some $\f$-basic element.

\begin{Lemma}\label{SPIR}\label{basicfprimary1}
 The quotient ring $O/t_nO$ is a Special PIR for every $n \ge 0$. In particular, the ideals (necessarily $\f$-primary) that contain $t_n O$ are equal to $(f^i , t_n)$, for $i=1 ,\ldots, mn + 2$,  and their norm of $(f^i,t_n)$ is $f^i$. 
\end{Lemma}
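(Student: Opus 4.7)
The plan is to recognize $O/t_nO$ as a local Artinian ring and show that its maximal ideal is principal; this is precisely the definition of a Special PIR, and the enumeration of ideals then follows by taking powers of a uniformizer. Since $t_n = f\beta^n$ is $\f$-basic with norm $N(t_n) = f^2 \cdot N(\beta)^n = f^{mn+2}$, the quotient $O/t_nO$ is local Artinian with maximal ideal $\f/t_nO$, residue field $O/\f \cong \mathbb{F}_f$, and cardinality $f^{mn+2}$. A local Artinian ring whose maximal ideal is principal is automatically a Special PIR (its nonzero ideals are exactly the powers of the maximal ideal), so everything reduces to showing that $\f/t_nO$ is cyclic in $O/t_nO$.

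The key step is the identity $\f = fO + t_nO$, which I would prove for $n \ge 1$ as follows. By Lemma \ref{generator}, $\f = (f, f\omega)$, so it suffices to write $f\omega = fr + t_n s$ with $r,s \in O$; equivalently, $\omega - \beta^n s \in O$ for some $s \in O$. Since $[D:O] = f$ is prime, $D/O$ is cyclic of order $f$ and is generated by any of its nonzero cosets. By Lemma \ref{beta}, $\beta^n \notin O$, so $\beta^n + O$ generates $D/O$ and there exists $c \in \Z$ with $\omega \equiv c\beta^n \pmod{O}$; setting $s = c$ finishes the argument. Hence $\f/t_nO = (f + t_nO)$ is principal, and $O/t_nO$ is a Special PIR with uniformizer the image of $f$. (For $n = 0$ a short direct check suffices: $\f/fO$ is generated by $f\omega + fO$ and squares into $\f^2/fO = 0$.)

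Once principality is established, the rest is formal. Because $|O/t_nO| = f^{mn+2}$ and the residue field has $f$ elements, the nilpotency index of $f + t_nO$ must equal $mn+2$. The ideals of $O/t_nO$ are then the powers $(f + t_nO)^i = (f^i + t_nO)$ for $i = 0, 1, \dots, mn+2$, and pulling back to $O$ gives the chain $(f^i, t_n)$; the $\f$-primary ones correspond to $i = 1, \dots, mn+2$, and since each successive quotient $(f^{i-1} + t_nO)/(f^i + t_nO)$ is a one-dimensional $\mathbb{F}_f$-vector space, one obtains $N((f^i, t_n)) = f^i$. The principal obstacle is the principality claim for $\f/t_nO$: everything hinges on $\beta^n$ being a generator of the cyclic group $D/O$, which is precisely what Lemma \ref{beta} provides.
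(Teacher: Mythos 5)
Your proof is correct and follows essentially the same route as the paper: both reduce the statement to showing that $\f/t_nO$ is principal, generated by the image of $f$, and then read off the ideal chain as the powers of this uniformizer in the local Artinian quotient. The only cosmetic differences are that you re-derive the key identity $\f=(f,t_n)$ from the cyclicity of $D/O$ and the fact that $\b^n\notin O$ (the paper simply invokes Lemma \ref{generator}, which applies since $t_n\in\f\setminus fO$), and that you obtain the nilpotency index $mn+2$ by a cardinality count rather than from the equivalence $f^h\in t_nO \Leftrightarrow h\ge mn+2$.
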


\begin{proof}
The claim is immediate when $t_n = t_0 = f$, since $\f/fO$ is the unique nonzero proper ideal of $O/fO$, it is generated by $f \o + f O$, and $(\f/fO)^2 = 0$, since $\f^2 \subset f O$. Note that, if $I$ is an ideal of $O$ containing $t_n$, then $I$ is basic $\f$-primary, since any prime ideal containing $I$ must contain the $\f$-basic element $t_n$. In particular, $O/t_nO$ has a unique maximal ideal, equal to $\f/t_nO$. Since $\f=( f , t_n)$ by Lemma \ref{generator}, it follows that $\f/t_nO$ is a principal ideal of $O/t_nO$, generated by $f +t_nO$. From this fact, it is not difficult to see that every nonzero ideal of $O/t_nO$ is principal, generated by some $f^i + t_n O$, for some $1 \le i \le mn + 1$ (see \cite[Proposition 4]{Hf}, for example). Indeed, $f^h \in t_n O$ if and only if $h \ge mn + 2$, since $N(t_n)= f^{mn+2}$.

Since $f^i$ is the least power of $f$ contained in the basic ideal $(f^i, t_n)$ (which therefore is primitive by (\ref{basicprimitive})), the last claim follows by the preliminaries in Section \ref{general def results}. 
\end{proof}

\begin{Prop} \label{D-modules}
Let $t\in O$ be a basic $\f$-primary element of norm $f^m$, and let $i\in\N$ be such that $i<m$. Then the ideal $I = (f^i,t)$ of $O$ is a $D$-module, equal either to $P^i\bar{P}$ or $P \bar{P}^i$. In particular, we get $(f^i, t_i) = (f^i, t_n)$, for every $n \ge i$. 
\end{Prop}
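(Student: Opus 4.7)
The plan is to identify $I$ with the $D$-ideal $P^i\bar P$ (or $P\bar P^i$, in the conjugate case) by first computing the extended ideal $ID$ and then invoking Proposition \ref{D-ideal}.

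Since $t$ is basic and lies in $\f=fD\setminus fO$, I can write $t=f\alpha$ with $\alpha\in D\setminus O$; from $N(t)=f^2N(\alpha)=f^m$ I obtain $N(\alpha)=f^{m-2}$. By Theorem \ref{classificationbasicfprimaryelements}, $t$ is associated in $D$ either to $t_n=f\beta^n$ or to $\bar{t_n}$ for some $n\ge 0$; I treat the first case, the other being symmetric. Letting $\mu$ denote the order of $P$ in the class group of $D$, we have $\beta D=P^\mu$, hence $\alpha D=P^{\mu n}$ and $tD=f\alpha D=P^{\mu n+1}\bar P$. In particular $m=\mu n+2$, so the hypothesis $i<m$ is equivalent to $i\le\mu n+1$.

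I then compute $ID=f^iD+f\alpha D=P^i\bar P^i+P^{\mu n+1}\bar P$. Factoring out $P^i\bar P$, the remaining sum $\bar P^{i-1}+P^{\mu n+1-i}$ equals $D$ by comaximality of the distinct primes $P$ and $\bar P$ (if either exponent is zero the corresponding summand is already $D$). Hence $ID=P^i\bar P$. The computation $v_P(f)=v_{\bar P}(f)=1$ then gives $P^i\bar P\cap\Z=f^i\Z$; combined with $I\subseteq ID$ and $f^i\in I$, we get $I\cap\Z=f^i\Z$. Since $I$ is primitive (as $\alpha\notin O$ forces $I\not\subset fO$), its norm equals $[I\cap\Z:\Z]=f^i$ by the representation (\ref{generatorprimary}).

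At this point Proposition \ref{D-ideal} applies to $I=(f^i,f\alpha)$: the divisibility criterion $f^{i-1}\mid N(\alpha)=f^{m-2}$ is exactly our hypothesis $i\le m-1$. Hence $I$ is a $D$-module, and so $I=ID=P^i\bar P$. The last assertion then follows by applying the first part to both $t_i$ and $t_n$ (with $n\ge i$): each satisfies its norm hypothesis (since $N(t_i)=f^{\mu i+2}>f^i$ and $N(t_n)=f^{\mu n+2}\ge N(t_i)>f^i$) and is trivially associated to an element of the $t_{\cdot}$-family (with $w=1$), yielding $(f^i,t_i)=P^i\bar P=(f^i,t_n)$. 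The main delicate point I anticipate is establishing $N(I)=f^i$, so that Proposition \ref{D-ideal} may be invoked with the correct exponent; this is what the intersection-with-$\Z$ computation is designed to secure.
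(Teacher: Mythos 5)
Your proof is correct and takes essentially the same route as the paper's: both arguments rest on the gcd computation $f^iD+tD=P^i\bar P$ in the Dedekind domain $D$ together with Proposition \ref{D-ideal} for the $D$-module property (you just perform the two steps in the opposite order). The one refinement you add --- verifying $N(I)=f^i$ via $ID\cap\Z=f^i\Z$ before invoking Proposition \ref{D-ideal}, whose statement presupposes $f^k=N(Q)$ --- is a legitimate precaution that the paper leaves implicit.
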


\begin{proof}
Since $f^{i + 1} \mid N(t)$, we get $I = ID$, by Proposition \ref{D-ideal}. Without loss of generality, we suppose that $tD = P^{m-1}\bar{P}$ (see the proof of Theorem \ref{classificationbasicfprimaryelements}). Since $D$ is a Dedekind domain, $f^i D+tD$ is the greatest common divisor of $f^i D$ and $tD$, so it is equal to $P^i\bar{P}$, since $f^i D=(P\bar{P})^i$. Hence, $I=ID=P^i\bar{P}$. The last claim follows immediately, since $f^i$ divides $N(t_n) = f^{nm + 2}$ for every $n \ge i$.
\end{proof}

For every $k \ge 1$, let $\QQ_k= (f^k, t_k) = P^{k}\bar{P}$; in this notation, $\QQ_1 = \f$.

The next theorem gives a description of the ideals of $O$ that contain a basic element.

\begin{Thm} \label{Qk}
\begin{itemize}

\item[(i)] Let $Q$ be a $\f$-basic ideal. Then there exists $k \geq 1$ such that $f \QQ_k\subset Q\subseteq \QQ_k$.

\item[(ii)] The ideals $\QQ_k = (f^{k}, t_k)$, for $k\in\N$, are pairwise distinct.

\item[(iii)] An ideal $Q$ of $O$ contains $\QQ_k$ is and only if $Q \in \{\QQ_i : i=0,\ldots,k\}$.

\item[(iv)] If $Q$ contains a basic element and it is not principal, then either $Q= \QQ_k$ or $Q={\bar \QQ}_k$ for some $k \in\N$.

\end{itemize}
\end{Thm}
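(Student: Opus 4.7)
The plan is to prove the four parts in the order (iv), (iii), (ii), (i), since (iv) establishes the structural core and the remaining three follow with minor additional work. For (iv), suppose $Q$ is non-principal and contains a basic element $t$. By Theorem \ref{classificationbasicfprimaryelements}, $t$ is $D$-associated either to some $t_n$ or to some $\bar t_n$; by the $P \leftrightarrow \bar P$ symmetry I treat only $t = t_n w$ with $w \in D^*$, producing the conclusion $Q = Q_k$ (the other case yields $\bar Q_k$). The argument of Lemma \ref{SPIR} extends verbatim from $t_n$ to $t$ itself: since $t$ is basic and $t \notin fO$, Lemma \ref{generator} gives $\f = (f, t)$, so the maximal ideal of $O/tO$ is principal, generated by the class of $f$, with nilpotency index $mn+2$ because $N(tO) = f^{mn+2}$. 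Hence the ideals of $O$ containing $tO$ are precisely $(f^i, t)$ for $0 \leq i \leq mn+2$, the extremes $i=0$ and $i=mn+2$ being $O$ and $tO$ respectively. For $1 \leq i \leq mn+1$, Proposition \ref{D-modules} forces $(f^i, t)$ to be a $D$-module, and computing the $D$-ideal greatest common divisor of $f^iD = P^i\bar P^i$ with $tD = t_n D = P^{mn+1}\bar P$ gives $(f^i, t) = P^i\bar P = Q_i$. Since $Q \neq O$ and $Q \neq tO$ (both principal), $Q = Q_k$ for some $1 \leq k \leq mn+1$.

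Part (iii) is a short consequence. If $Q \supseteq Q_k$, then $t_k \in Q_k \subseteq Q$, so the analysis above applied with $t = t_k$ shows $Q$ lies in the list $\{O, Q_1, \ldots, Q_{mk+1}, t_k O\}$. The ideal $t_k O$ is ruled out by the norm comparison $N(t_k O) = f^{mk+2} > f^{k+1} = N(Q_k)$, forcing $t_k O \subsetneq Q_k$. Among the $Q_i$, the containment $Q_i \supseteq Q_k$ translates in $D$ to $P^i\bar P \mid P^k\bar P$, i.e., $i \leq k$. Part (ii) is then immediate, since the $Q_k = P^k\bar P$ have pairwise distinct $D$-ideal factorizations.

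For (i), let $Q$ be $\f$-basic. Being $\f$-primary and proper, $Q \subseteq \f = P\bar P$, so $QD \subseteq P\bar P$ and $QD = P^a\bar P^b$ with $a, b \geq 1$. Since $\f^2$ is a $D$-module, $Q \not\subseteq \f^2$ is equivalent to $QD \not\subseteq P^2\bar P^2$, which forces $\min(a,b) = 1$; by the $P \leftrightarrow \bar P$ symmetry take $b = 1$ and set $k := a \geq 1$, so $QD = Q_k$. The containment $Q \subseteq Q_k$ is automatic, and the argument from the proof of Lemma \ref{units1}(ii) (trivially valid also when $Q = QD$) shows $fQD \subseteq Q$, giving $fQ_k \subseteq Q$. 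The inclusion is strict because $fQ_k = P^{k+1}\bar P^2 \subseteq \f^2$ while $Q \not\subseteq \f^2$.

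The main subtlety I anticipate is the identification $(f^i, t) = Q_i$ rather than $\bar Q_i$ inside the proof of (iv): this hinges on $tD = t_n D$ having $\bar P$-exponent exactly $1$, so that the $D$-GCD of $f^iD$ and $tD$ comes out to $P^i\bar P$ and not $P\bar P^i$. This is precisely where the fixed choice of $\beta$ as a generator of $P^m$ (rather than of $\bar P^m$) in the setup of Section \ref{lattice} selects the side, and the symmetric case produces the conjugate family $\bar Q_k$ throughout. Once this is in place, the remaining steps are routine divisibility and norm computations.
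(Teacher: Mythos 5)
Your proof is correct and rests on exactly the same ingredients as the paper's (the factorization $QD=P^a\bar P^b$ in $D$, Lemma \ref{index}, Proposition \ref{D-modules}, and the Special-PIR structure of Lemma \ref{SPIR}); reordering the parts so that (iv) is proved first and in full detail does not change the substance. The only point to tidy is the degenerate case of a basic element $t$ associated to $f$ in $O$, where $t\in fO$ and $(f,t)=fO\neq\f$, so your appeal to Lemma \ref{generator} does not apply; there the ideals containing $tO$ are just $O\supset\f\supset fO$ and the conclusion $Q=\QQ_1=\f$ still holds.
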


\begin{proof}
(i) Since $Q$ is basic, as in the proof of Theorem \ref{classificationbasicfprimaryelements}, we have $QD=P^{k}\bar{P}=\QQ_k$, for some $k\geq 1$ (or its conjugate), so $Q\subseteq \QQ_k$. By Lemma \ref{index}, either $Q=\QQ_k$ or $[\QQ_k:Q]=f$. In each case, we get $f\QQ_k\subset Q\subseteq \QQ_k$.

(ii) By Proposition \ref{D-modules}, we get $\QQ_k = P^k \bar P$ (and not the conjugate, since $\b^k \in P \setminus \bar P$). Hence the $\QQ_k $'s are pairwise distinct, as $k$ ranges in $\N$.

(iii) For $0 \le i \le k$, by Proposition \ref{D-modules} we get $
\QQ_i = (f^{i},t_i) = (f^{i},t_k)\supseteq (f^{k}, t_k) = \QQ_k$. 
 Conversely, if $I \supseteq \QQ_k$, then $I$ contains $t_k$, hence, by Lemma \ref{basicfprimary1}, we get $I=(f^j,t_k)$, for some $j \in \{1,\ldots, k+1\}$, so $I = (f^j, t_k) = (f^j, t_j) = \QQ_j$. 
 
(iv) This follows from (ii) and its proof, possibly replacing $\QQ_i$ with their conjugates.
\end{proof}

In order to complete the description of the lattice of $\f$-basic ideals, it remains to find the basic ideals of $O$ that do not contain a $\f$-basic element.

\begin{Thm} \label{not containing F-basic}
 Let $Q$ be a basic $\f$-primary ideal not containing any basic element. Then 
 \begin{itemize}
\item[(i)] $Q$ lies properly between $\QQ_k$ and $f \QQ_k$, for some $k > 0$; 
 \item[(ii)] $Q = (f^{k+1}, a f^{k} + t_k)$ for some $1 \le a \le f - 1$; 
 \item[(iii)]  $Q$ does not contain any other basic $\f$-primary ideal;
 \item[(iv)] $Q$ is an invertible ideal of $O$. 
\end{itemize} 
 \end{Thm}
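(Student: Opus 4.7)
I prove (i), (ii), (iv), (iii) in this order, deriving (iii) from the invertibility established in (iv); throughout I may assume $QD = \QQ_k$ (the conjugate case $QD = \bar{\QQ}_k$ is handled symmetrically).

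Part (i) is immediate: Theorem~\ref{Qk}(i) furnishes $k \ge 1$ with $f\QQ_k \subsetneq Q \subseteq \QQ_k$, and since $\QQ_k$ contains the basic element $t_k$ while $Q$ does not by hypothesis, $Q$ must lie strictly between $\QQ_k$ and $f\QQ_k$.

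For (ii), since $\QQ_k = (f^k, f\b^k)$ is a $D$-module, Theorem~\ref{intermediate2}(ii) lists the $f + 1$ ideals properly between $\QQ_k$ and $f\QQ_k$ as $J = (f^k, f^2\b^k)$ together with $J_a = (f^{k+1}, af^k + t_k)$ for $a = 0, \dots, f-1$. The ideal $J_0$ is ruled out because it contains the basic element $t_k$. To rule out $J$, I split into cases. When $k = 1$, the identity $f^2\b = f t_1 \in fO$ gives $J = fO$, which contains the basic element $f$. When $k \ge 2$, a direct Dedekind computation yields $JD = f^kD + f^2\b^k D = P^k\bar{P}^k + P^{mk+2}\bar{P}^2 = P^k\bar{P}^2$; matching $N_O(JD) = f^{k+2}/f = f^{k+1}$ against $N_O(J) = f \cdot N_O(\QQ_k) = f^{k+1}$ forces $J = JD = P^k\bar{P}^2 \subseteq \f^2$, so $J$ is not $\f$-basic. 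In both cases $Q \ne J$, and we conclude $Q = J_a$ for some $1 \le a \le f - 1$. The principal technical hurdle is precisely this case split: $J$ fails to be a valid candidate for $Q$ for two genuinely different structural reasons, and both must be handled.

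For (iv), the description in (ii) gives $Q = J_a \subsetneq QD$, so $Q$ is not a $D$-module and is $O$-invertible by the preliminaries of Section~\ref{general def results}. For (iii), suppose for contradiction that $Q' \subsetneq Q$ is another $\f$-basic ideal; invertibility of $Q$ lets us write $Q' = QI$ for a proper integral ideal $I$ of $O$. Any maximal ideal $\mathfrak{m}$ containing $I$ also contains $Q' = QI$; since $Q'$ is $\f$-primary, $\mathfrak{m} \supseteq \sqrt{Q'} = \f$, forcing $\mathfrak{m} = \f$. Hence $I \subseteq \f$ and $Q' = QI \subseteq Q\f \subseteq \f^2$, contradicting the $\f$-basicness of $Q'$.
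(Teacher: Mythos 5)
Your proof is correct, and while parts (i)--(ii) follow essentially the paper's route, parts (iii) and (iv) are argued differently. For (iv) the paper computes $N(\gamma)$ for $\gamma = af^{k-1}+\b^k$ and invokes Proposition \ref{D-ideal}; you instead observe that $Q = J_a \subsetneq \QQ_k = QD$, so $Q$ is not a $D$-module and hence invertible. This is shorter and, as a bonus, sidesteps a delicate point in the paper's norm computation at $k=1$ (there the term $a^2f^{2k-2}=a^2$ is not divisible by $f$, and one value of $a$ actually makes $f \mid N(a+\b)$ — that value corresponds to $\bar{\QQ}_2$, which is excluded only because it contains the basic element $\bar{t}_2$). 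For (iii) the paper uses the explicit shape from (ii) applied to a hypothetical $Q'\subset Q$ and derives $t_h\in Q$; you instead reuse the invertibility of $Q$ and the factorization $Q'=QI\subseteq Q\f\subseteq\f^2$, exactly as in the proof of Theorem \ref{intermediate2}(iii) — a cleaner, more structural argument, though it makes (iii) depend on (iv), so your reordering of the parts is essential. In (ii), your two-case elimination of $J$ (via $J=fO\ni f$ for $k=1$, and $J=JD=P^k\bar{P}^2\subseteq\f^2$ by the index count for $k\ge 2$) is more explicit than the paper's one-line identification $J=f\QQ_{k-1}$ (which the paper calls ``not basic,'' although for $k=1$ it equals $fO$, which is basic but contains a basic element); both reach the same conclusion.
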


\begin{proof}
(i) The ideal $Q$, being $\f$-basic, must lie between some $\QQ_k$ and $f \QQ_k$ by (i), and it is different from $Q_k$, since it does not contain basic elements.

(ii) This follows from Theorem \ref{intermediate1}, since necessarily $Q$ is different from $ (f^{k}, ft_k) = f\QQ_{k-1}$, which is not a $\f$-basic primary ideal, and from $(f^{k+1}, t_k)$, which contains the $\f$-basic element $t_k$.

(iii) Let $Q'$ be a basic ideal contained in $Q = (f^{k+1}, a f^{k} + t_k)$. Then $Q'$ cannot contain a $\f$-basic element, hence, by (ii) we get $Q' = (f^{h+1}, b f^{h} + t_h)$, for some $h > 0$, $b \in \{1, \dots, f-1 \}$. Let us assume, for a contradiction, that $Q \ne Q'$, so $Q \supset Q'$. It follows that $h > k$. Then we readily see that $Q' \subset Q$ if and only if $t_h \in Q$, impossible, since $t_h$ is $\f$-basic.

(iv) Let $f \gamma = a f^{k} + t_k = f(a f^{k-1} + \b^k)$. By Proposition \ref{D-ideal}, it suffices to show that $f^{k}$ does not divide $N(\gamma)$. We get $
N(\gamma) = a^2 f^{2k-2} + af^{k-1}(\b^k + \bar{\b}^k) + f^{mk}$. Since $f$ does not divide the trace of $\b^k$ (otherwise $\b^k \in f D = \f$, impossible), we see that $N(\gamma) = f^{k-1} b$, where $b \notin f \Z$.
\end{proof}

Note that an ideal $Q$ satisfying the hypothesis of the previous theorem, is not a $D$-module. The converse of Theorem \ref{not containing F-basic}, iv) is false: consider any principal $\f$-primary ideal generated by a basic element. Therefore, the basic ideals that are invertible are either principal, necessarily generated by a $\f$-basic element, or they do not contain any $\f$-basic element.
\begin{Rem}
Let $k\in\N$. By Theorem \ref{classificationbasicfprimaryelements}, there exist principal intermediate ideals between $Q_k$  and $fQ_k$ if and only if $Q_k$ is principal as an ideal of $D$, generated by a $\f$-basic element of $O$. In fact, if $fQ_k\subset tO\subset Q_k$ then we have $tD=Q_k$. Conversely, if $Q_k\subseteq \f=f D$ is principal, then $Q_k$ is generated by an element of the form $f\beta$, for some $\beta\in D\setminus O$. Hence, $f\beta O$ is an intermediate ideal between $fQ_k$ and $Q_k$. Moreover, as we saw in the proof of Theorem \ref{classificationbasicfprimaryelements}, the last condition holds if and only if $m$ divides $k-1$. For such $k$'s, there are $\tau=[D^*:O^*]$ intermediate principal ideals between $Q_k$  and $fQ_k$ (essentially by the same phenomenon of Lemma \ref{units2}).
\end{Rem}

The diagram below represents the lattice of $\f$-primary ideals in the split case. 
$$\xymatrix@R-1.1pc@C-2.4pc{
&&&&&&&&O&&&&&&\\
&&&&&&&&\f\ar@{-}[u]\ar@{-}[dl]\ar@{-}[dr]&&&&&&\\
&&&&&&\QQ_2\ar@{-}[urr]&\ldots&fO\ar@{-}[u]&\ldots&\overline{\QQ_2}\ar@{-}[ull]&&&&\\
&&&&\QQ_3\ar@{-}[urr]&\ldots\ar@{-}[ur]&\ldots\ar@{-}[u]&\ldots\ar@{-}[ul]&\f^2\ar@{-}[u]\ar@{-}[ul]\ar@{-}[ull]\ar@{-}[ur]\ar@{-}[urr]&\ldots\ar@{-}[ur]&\ldots\ar@{-}[u]&\ldots\ar@{-}[ul]&\overline{\QQ_3}\ar@{-}[ull]&&\\
&&\QQ_4\ar@{-}[urr]&\ldots\ar@{-}[ur]&\ldots\ar@{-}[u]&\ldots\ar@{-}[ul]&f\QQ_2\ar@{-}[u]\ar@{-}[ul]\ar@{-}[ull]\ar@{-}[ur]\ar@{-}[urr]&\ldots\ar@{-}[ur]&f^2O\ar@{-}[u]&\ldots\ar@{-}[ul]&f\overline{\QQ_2}\ar@{-}[u]\ar@{-}[ul]\ar@{-}[ull]\ar@{-}[ur]\ar@{-}[urr]&\ldots\ar@{-}[ur]&\ldots\ar@{-}[u]&\ldots\ar@{-}[ul]&\overline{\QQ_4}\ar@{-}[ull]\\
}
$$
\vskip0.5cm
\subsection{Ramified case}

We assume now that $f$ is ramified in $D$, so $\f=P^2$, for some prime ideal $P$ of $D$. 

\begin{Thm}

\begin{itemize}

\item[(i)] If $d \equiv 1, 2\pmod 4$ or $d \equiv  3 \pmod 4$ and $f \ne 2$, then we have $P = fD+\sqrt{d}D$. If $d \equiv  3 \pmod 4$ and $f = 2$, then $P = 2D+ (1 + \sqrt{d})D$.

\item[(ii)] Let $Q\subseteq \f$ be a basic $\f$-primary ideal. Then either $P^{4}\subset Q\subseteq P^2$ or $P^{5} \subset Q\subseteq P^3$.

\item[(iii)] If $\f \supset Q \supset \f^2$, then either $Q = J_a = (f^2, f (a + \sqrt{d}))$, for some $a = 0, 1, \dots, f-1$, or $Q = J = (f, f^2 \sqrt{d}) = f O$. 

\item[(iv)] if $P^3 \supset Q \supset P^5 = f P^3$, then $Q = H_a = (f^3, a f^2 + f \sqrt{d})$, for some $a = 0, 1, \dots, f-1$, or $Q = (f^2, f^2 \sqrt{d}) = f \f = P^4$, except when $f = 2$ and $d \equiv 3\pmod 4$; in this latter case, we either get $Q = (8, 2(1 + \sqrt{d}))$ or $Q = (8, 4 + 2(1 + \sqrt{d}))$, or $Q = (4, 4(1 + \sqrt{d})) = P^4$.   

\end{itemize}

\end{Thm}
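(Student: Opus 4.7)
My plan is to prove the four statements in the order given, with Theorem~\ref{intermediate2} providing the combinatorial backbone for (iii) and (iv) once the right generator of $P$ and the right $\a$ have been identified.

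Part (i) is a direct verification. In the non-exceptional subcases the expansion $(fD+\sqrt{d}\,D)^2 = f^2D + f\sqrt{d}\,D + dD$ reduces matters to placing $f$ inside the square; since ramification of $f$ forces $f \mid d$ (for $f$ odd) or $f=2$ with $d\equiv 2 \pmod 4$, squarefreeness of $d$ gives $\gcd(d/f,f)=1$, and a $\Z$-linear combination of $d$ and $f^2$ produces $f$. The reverse inclusion is immediate since each generator of the square lies in $fD$. For the exceptional case $f=2$, $d\equiv 3\pmod 4$, one first observes that $\gcd(2,d)=1$ makes $2D+\sqrt{d}\,D = D$, so the prime must instead be generated by $1+\sqrt{d}$; expanding $(2D+(1+\sqrt{d})D)^2$ and using $(1+\sqrt{d})^2 - 2(1+\sqrt{d}) = d-1 \equiv 2 \pmod 4$ places $2$ in the square.

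For (ii), if $Q$ is $\f$-basic then $QD$ is $P$-primary in $D$: any prime of $D$ containing $Q$ contracts to a prime of $O$ containing $Q$, hence to $\f$, forcing the prime of $D$ to be $P$. Thus $QD = P^e$. From $Q\subseteq \f=P^2$ we get $e\ge 2$, and from $Q\not\subseteq \f^2 = P^4$ we get $e\le 3$. Lemma~\ref{index} then yields $fQD \subseteq Q \subseteq QD$; with $e=2$ this becomes $P^4 \subseteq Q \subseteq P^2$ and with $e=3$ it becomes $P^5 \subseteq Q \subseteq P^3$. In both cases the strict inclusion on the left follows from the basic hypothesis, since $Q=P^4$ or $Q=P^5$ would put $Q$ inside $\f^2 = P^4$.

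For (iii) and (iv) I invoke Theorem~\ref{intermediate2}(ii). In (iii), since $\sqrt{d}\in D\setminus O$ for every prime $f$, Lemma~\ref{generator} gives $\f=(f,f\sqrt{d})$, so applying Theorem~\ref{intermediate2}(ii) with $k=1$, $\a=\sqrt{d}$ yields the $f+1$ intermediate ideals $J_a=(f^2,f(a+\sqrt{d}))$ and $J=(f,f^2\sqrt{d})$, the latter collapsing to $fO$ since $f^2\sqrt{d} = f(f\sqrt{d}) \in fO$. In (iv), the non-exceptional case uses $\a=\sqrt{d}$: I verify $P^3=(f^2,f\sqrt{d})$ starting from the $D$-ideal identity $P^3 = fP = f^2D + f\sqrt{d}\,D$, the delicate point being to show $f^2\o \in (f^2,f\sqrt{d})_O$ when $d\equiv 1\pmod 4$, which uses $f\mid d$ and the integrality of $(d-1)/2$. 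Proposition~\ref{D-ideal} then certifies $P^3$ as a $D$-module since $f \mid N(\sqrt{d}) = -d$, and Theorem~\ref{intermediate2}(ii) gives $H_a=(f^3,af^2+f\sqrt{d})$ together with $J=(f^2,f^2\sqrt{d})$, the latter equalling $f\f=P^4$ by the same absorption as in (iii).

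The main obstacle is the exceptional case $f=2$, $d\equiv 3\pmod 4$ of (iv): the naive choice $\a=\sqrt{d}$ fails because $N(\sqrt{d})=-d$ is odd, so by Proposition~\ref{D-ideal} the ideal $(4,2\sqrt{d})$ is not a $D$-module and cannot equal $P^3$. I replace $\a$ by $1+\sqrt{d}$; then $N(1+\sqrt{d}) = 1-d \equiv 2 \pmod 4$ satisfies $2\mid N(\a)$ but $4\nmid N(\a)$, and a direct computation identifies $(4,2(1+\sqrt{d}))$ with $fP=P^3$. Theorem~\ref{intermediate2}(ii) then produces the three intermediate ideals $J_0=(8,2(1+\sqrt{d}))$, $J_1=(8,4+2(1+\sqrt{d}))$, and $J=(4,4(1+\sqrt{d}))$, the last equalling $4D=P^4$ via $4(1+\sqrt{d})-4=4\sqrt{d}$ together with $O+\sqrt{d}\,O = D$. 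Selecting the correct generator of $P$ and of $\a$, and checking the ideal equalities by a careful interplay between the $O$-structure and the $D$-structure, is precisely where the bookkeeping is most delicate.
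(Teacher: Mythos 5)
Your proposal is correct and follows essentially the same route as the paper: part (i) by squaring the candidate generators (with the separate computation for $f=2$, $d\equiv 3\pmod 4$), part (ii) by showing $QD\in\{P^2,P^3\}$ and invoking Lemma \ref{index}, and parts (iii)--(iv) by applying Theorem \ref{intermediate2}(ii) to $\f=(f,f\sqrt{d})$ and to $P^3=fP$ written as $(f^2,f\sqrt{d})$, respectively $(4,2(1+\sqrt{d}))$ in the exceptional case. The only difference is that you spell out in more detail the identification of the $D$-ideal $fP$ with the corresponding $O$-ideal, which the paper treats as immediate.
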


\begin{proof}
(i) In any case, we have $\f = (f, f \sqrt{d})$. Assume that $f \mid d$; we get $d = f \lambda$, with $\lambda \notin f \Z$, since $d$ is square-free. Then the ideal $(f, \sqrt{d})$ of $D$  satisfies $(f, \sqrt{d})^2 = (f^2, d)D = f D = \f$, hence it coincides with $P$. This argument covers all the possible cases, except when $f = 2$ and $d \equiv 3$ modulo $4$.
Under this latter circumstance, we take the ideal $(2, 1 + \sqrt{d})$, whose square is $(4, {1 + d} + 2\sqrt{d}) = (4, 2 \sqrt{d}) = 2 D = \f$, where the preceding equalities hold since $d + 1 \in 4 \Z$, and $d \in (2, \sqrt{d})$ is odd. It follows that $P = (2, 1 + \sqrt{d})$ as required.

(ii) Since $D$ is a Dedekind domain and $Q$ is a basic $\f$-primary ideal, $QD$ is equal either to $P^2$ or to $P^3$. In both cases, by Lemma \ref{index}, $fQD\subset Q\subseteq QD$, which is the statement.

(iii) and (iv) follow from Theorem \ref{intermediate2}, since, by (i), either $P^3 =   P \f =  Pf D = fP = (f^2, f \sqrt{d})$ or $P^3 = 2 P = (4, 2(1 + \sqrt{d}))$, in the exceptional case. In this latter case, we immediately get the equality $(4, 4(1 + \sqrt{d})) = 2 \f = \f^2 = P^4$. 
\end{proof}

Besides the basic elements $t\in\f$ such that $\f^2\subset tO\subset \f$, which are associated to $f$ by a unit of $D$ (see Lemma \ref{units2}), in the ramified case we may have other basic elements such that $P^5\subset tO\subset P^3$, according to whether $P$ is a principal ideal of $D$ or not, as the next result shows.

\begin{Prop}\label{basicramifiedcase}
There exists a basic element $t\in O$ such that $P^5\subset tO\subset P^3$ if and only if $P$ is a principal ideal of $D$. If this condition holds, say $P=\beta D$, for some $\beta\in D$, then every basic element is associated to $f\beta$ by a unit of $D$.
\end{Prop}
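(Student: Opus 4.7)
The plan is to reduce both directions of the equivalence to identifying $tD$ for a basic $t \in O$ satisfying $P^5 \subset tO \subset P^3$. Since $t$ is $\f$-primary, every prime of $D$ containing $t$ must sit above $\f$; but in the ramified case $P$ is the unique such prime, so $tD = P^j$ for some $j$. The inclusions $tO \subseteq P^3$ and $tO \not\subseteq P^4$ (the latter because $t$ is basic and $\f^2 = P^4$) force $tD \subseteq P^3$ and $tD \not\subseteq P^4$; hence $tD = P^3$.

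This disposes of the forward direction: $P^3 = tD$ is principal, and $t \in P^3 \subseteq P^2 = fD$ lets us write $t = fs$ for some $s \in D$, yielding $P^3 = P^2 \cdot sD$. Cancelling the invertible ideal $P^2$ gives $P = sD$, so $P$ is principal. The same identity also proves the final clause: if $P = \beta D$, then $P^3 = f\beta D$, so $tD = f\beta D$ forces $t = u\, f\beta$ with $u \in D^*$.

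For the converse, assume $P = \beta D$ and set $t = f\beta \in fD = \f \subseteq O$. Then $tD = (fD)(\beta D) = P^2 \cdot P = P^3$. Lemma \ref{index} gives $[tD : tO] = f$, so $tO \subsetneq P^3$ is strict. The containment $fD \subseteq O$ implies $P^5 = f P^3 = f \cdot tD \subseteq tO$; strictness follows from $[P^3 : P^5] = f^2$ versus $[P^3 : tO] = f$. To verify that $t$ is $\f$-basic, i.e., $tO \not\subseteq P^4$, I will observe that $P^4$ is a $D$-module while $tO = f\beta O$ is not (otherwise $\omega \cdot f\beta \in f\beta O$ would force $\omega \in O$, contradicting $\omega \notin O$). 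Since both $tO$ and $P^4$ are subgroups of $P^3$ of index $f$, they are either equal or incomparable; equality being excluded, $tO \not\subseteq P^4$.

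The delicate point is the index bookkeeping: in the forward direction we must rule out $tD = P^4$ or $P^5$, and in the backward direction we must distinguish the two candidates $tO$ and $P^4$ of equal index $f$ in $P^3$. Both are handled cleanly by Lemma \ref{index}, by the norm identity $N(P) = f$, and by the fact that $\omega \notin O$.
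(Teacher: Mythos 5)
Your proof is correct and follows essentially the same route as the paper: both directions hinge on showing that a basic $t$ with $P^5\subset tO\subset P^3$ satisfies $tD=P^3$, so that $P=(t/f)D$ is principal, and conversely that $t=f\beta$ works when $P=\beta D$. The only divergence is a sub-step: to see that $f\beta$ is basic, the paper notes $\beta\notin O$ (else $\beta\in P\cap O=P^2$) and applies Proposition \ref{norm}, whereas you compare $tO$ and $P^4$ as index-$f$ subgroups of $P^3$ and rule out equality because only $P^4$ is a $D$-module — both arguments are sound.
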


\begin{proof}
Let us assume that $P=\beta D$, for some $\beta\in D$. Under the present circumstances we get $N(\beta) = f$ and $f = u \b^2$, for some unit $u \in D$. Clearly, $\beta\notin O$, otherwise $\beta\in P\cap O=\f=P^2$, which is impossible. Hence, $t= f\beta$ is a basic element, according to Proposition \ref{norm}, since its norm is $f^3$ and $t\notin fO$. Since $tD=\beta^3 D=P^3$, we get $\b^5 D = P^5 \subset tO \subset P^3$.

Conversely, let $t\in O$ be a basic element such that $P^5\subset tO\subset P^3$. Using Lemma \ref{index}, we get $tD=P^3=f P$, so $P=\frac{t}{f}D$ is a principal ideal of $D$. 

The last claim follows arguing as in Lemma \ref{units2}.
\end{proof}

The diagram below represents the lattice of $\f$-primary ideals in the ramified case. By Proposition \ref{D-ideal} and the above description of the basic ideals, all the basic ideals, with the exception of $\f$ and $P^3$, are invertible.

$$\xymatrix@R-1.1pc@C-2.4pc{
&&&&&&O&&\\
&&&&&&\f=P^2\ar@{-}[u]&&\\
&&&P^3\ar@{-}[urrr]&J_1\ar@{-}[urr]&\ldots\ar@{-}[ur]&fO\ar@{-}[u]&\ldots\ar@{-}[ul]&J_{f-1}\ar@{-}[ull]\\
\ldots\ar@{-}[urrr]&&\ldots\ar@{-}[ur]&\ldots\ar@{-}[u]&\ldots\ar@{-}[ul]&&\f^2=P^4\ar@{-}[u]\ar@{-}[ul]\ar@{-}[ulll]\ar@{-}[ur]\ar@{-}[ull]\ar@{-}[urr]&&\\
&&&P^5\ar@{-}[u]\ar@{-}[ul]\ar@{-}[ur]\ar@{-}[ulll]\ar@{-}[urrr]\ar@{-}[dl]&fJ_1\ar@{-}[urr]\ar@{-}[drr]&\ldots\ar@{-}[ur]\ar@{-}[dr]&f^2O\ar@{-}[u]&\ldots\ar@{-}[ul]&fJ_{f-1}\ar@{-}[ull]\\
\ldots\ar@{-}[urrr]&&\ldots&\ldots\ar@{-}[u]&\ldots\ar@{-}[ul]&\ldots\ar@{-}[ull]&\f^3=P^6\ar@{-}[u]\ar@{-}[ulll]\ar@{-}[ur]\ar@{-}[urr]&&
}
$$
\vskip0.5cm
 In our final remark we make some considerations for the case where $f$ is not prime.

\begin{Rem} \label{finalremark}
We retain the preceding notation, but here we assume that $f$ is not a prime number, say $f = \prod_{i=1}^n f_i^{s_i}$, where the $f_i\in\Z$ are pairwise distinct prime numbers and $s_i\geq0$. Under the present circumstances, it is straightforward to verify that the conductor $\f = (f, f\o)$ is the product $\f = \prod_{i= 1}^n \mathfrak G_i$, where $\mathfrak G_i= (f_i^{s_i}, f\omega )$, for $i=1,\ldots,n$. The $\mathfrak G_i$ are primary ideals of $O$, namely, 
$\sqrt{\mathfrak G_i}= \f_i = (f_i, f\omega)$, where the $\f_i$'s are the prime ideals of $O$ that contain $\f$. Then the lattice of the primary ideals of $O = \Z[f \o]$ is given by the disjoint union of the lattices of the $\f_i$-primary ideals, together with the chains of the powers of the prime ideals $N$ of $O$ that are coprime with $\f$. So we may confine ourselves to a prime ideal $\f_i$, for a fixed $i\in\{1,\ldots,n\}$. It can be easily verified that $\f_i^2 = f_i \f_i$, so also the lattice of the $\f_i$-primary ideals has a structure by layers. The main definitions and several results, proved above for the case of $\f$ prime, can be adapted to $\f_i$-primary ideals. We intend to examine thoroughly  this general case in a coming paper. The main difference with the case of $\f$ prime is that the $\f_i$ are not $D$-modules, and, in fact, no $\f_i$-primary ideal is a $D$-module if $f$ is not a power of a single prime. 

As an instance, we give a generalization of the formula we got in the case of prime conductor to the general case. We use the notation $O_f= \Z[f\omega]$. Then for each $i=1,\ldots, n$ we have
$$\f _i=f_iO_f+f\omega O_f=f_i O_{f/f_i}=(O_f: O_{f/f_i})$$
that is, $\f_i$ is the conductor of the order $O_{f/f_i} = \Z[\frac{f}{f_i}\omega]$ into the order $\Z[f\omega]$.     
\end{Rem}

\end{document}